\documentclass[11pt, a4paper]{amsart}

\usepackage[hmargin=32mm, vmargin=27mm, includefoot, twoside]{geometry}
\usepackage[bookmarksopen=true]{hyperref}
\usepackage[english]{babel}
\usepackage{amsmath,amssymb}
\usepackage{mathrsfs}
\usepackage{dsfont}
\usepackage{lmodern}
\usepackage{graphicx}
\usepackage[utf8]{inputenc}
\usepackage[figurename=Fig.]{caption}

\newtheorem{thmintro}{Theorem}

\newtheorem{corintro}[thmintro]{Corollary}
\newtheorem{theorem}{Theorem}[section]

\newtheorem{lemma}[theorem]{Lemma}
\newtheorem{prop}[theorem]{Proposition}
\theoremstyle{definition}
\newtheorem{remark}[theorem]{Remark}
\newtheorem{example}[theorem]{Example}
\newtheorem{definition}[theorem]{Definition}

\newcommand{\NN}{\mathbb{N}}
\newcommand{\RR}{\mathbb{R}}
\newcommand{\CCC}{\mathscr{C}}
\newcommand{\OOO}{\mathcal{O}}
\newcommand{\inv}{^{-1}}
\newcommand{\co}{\colon\thinspace}
\newcommand{\ra}{\rightarrow}
\newcommand{\tc}{\approx_t}
\newcommand{\ras}[1]{\stackrel{#1}{\rightarrow}}
\newcommand{\simx}[1]{\stackrel{#1}{\sim}}

\DeclareMathOperator{\CAT}{CAT(0)}
\DeclareMathOperator{\Ch}{Ch}
\DeclareMathOperator{\CMin}{CombiMin}
\DeclareMathOperator{\dist}{d}
\DeclareMathOperator{\dc}{\dist_{\Ch}}
\DeclareMathOperator{\Min}{Min}
\DeclareMathOperator{\Pc}{Pc}
\DeclareMathOperator{\proj}{proj}
\DeclareMathOperator{\Stab}{Stab}
\DeclareMathOperator{\supp}{supp}

\numberwithin{equation}{section}

\begin{document}

\renewcommand{\proofname}{{\bf Proof}}

\title[Cyclically reduced elements in Coxeter groups]{Cyclically reduced elements in Coxeter groups}


\author[T.~Marquis]{Timoth\'ee \textsc{Marquis}$^*$}
\address{UCLouvain, IRMP, 1348 Louvain-la-Neuve, Belgium}
\email{timothee.marquis@uclouvain.be}
\thanks{$^*$F.R.S.-FNRS Research Fellow}

\subjclass[2010]{20F55, 20E45}

\begin{abstract}
Let $W$ be a Coxeter group. We provide a precise description of the conjugacy classes in $W$, in the spirit of Matsumoto's theorem. This extends to all Coxeter groups an important result on finite Coxeter groups by M.~Geck and G.~Pfeiffer from 1993. In particular, we describe the cyclically reduced elements of $W$, thereby proving a conjecture of A.~Cohen from 1994.
\end{abstract}




\maketitle

\section{Introduction}
Let $(W,S)$ be a Coxeter system. By a classical result of J.~Tits (\cite{Tit69}), also known as Matsumoto's theorem (see \cite{Mat64}), any given reduced expression of an element $w\in W$ can be obtained from any other expression of $w$ by performing a finite sequence of braid relations and $ss$-cancellations (i.e. replacing a subword $(s,s)$ for some $s\in S$ by the empty word). In particular, this yields a very simple and elegant solution to the word problem in Coxeter groups.

The conjugacy problem for Coxeter groups was solved about 30 years later, by D.~Krammer in his thesis from 1994 (published in \cite{Kra09}): there exists a cubic algorithm deciding whether two words on the alphabet $S$ determine conjugate elements of $W$. However, Krammer's solution does not provide a sequence of ``elementary operations'' to pass from one word to the other, as do the braid relations and $ss$-cancellations in Matsumoto's theorem.

In this paper, we address the following long-standing open question on Coxeter groups: \emph{Is there an analogue of Matsumoto's theorem for the conjugacy problem in Coxeter groups?}

A very natural elementary operation on words to consider for the conjugacy problem is that of cyclic shift: by extension, we say that an element $w'\in W$ is a {\bf cyclic shift} of some $w\in W$ if there is some reduced decomposition $w=s_1\dots s_d$ ($s_i\in S$) of $w$ such that either $w'=s_2\dots s_ds_1$ or $w'=s_ds_1\dots s_{d-1}$. Such operations are, however, not sufficient to describe conjugacy classes in general, as for instance illustrated by the Coxeter group $W=\langle s,t \ | \ s^2=t^2=(st)^3=1\rangle$ of type $A_2$, in which the simple reflections $s$ and $t$ are conjugate, but cannot be obtained from one another through a sequence of cyclic shifts. Nonetheless, in the terminology of \cite[Chapter~3]{GP00},  the elements $w:=s$ and $w':=t$ are {\bf elementarily strongly conjugate}, meaning that $\ell_S(w)=\ell_S(w')$ and that there exists some $x\in W$ with $w'=x\inv wx$ such that either $\ell_S(x\inv w)=\ell_S(x)+\ell_S(w)$ or $\ell_S(wx)=\ell_S(w)+\ell_S(x)$.

Motivated by the representation theory of Hecke algebras, M.~Geck and G.~Pfeiffer proved in \cite{GP93} that if $W$ is finite, then for any conjugacy class $\OOO$ in $W$,
\begin{enumerate}
\item
any $w\in \OOO$ can be transformed by cyclic shifts into an element $w'$ of minimal length in $\OOO$, and
\item
any two elements $w,w'$ of minimal length in $\OOO$ are {\bf strongly conjugate}, i.e. there exists a sequence $w=w_0,\dots,w_n=w'$ of elements of $W$ such that $w_{i-1}$ is elementarily strongly conjugate to $w_i$ for each $i=1,\dots,n$.
\end{enumerate}
Together with S.~Kim, they later generalised this theorem (see \cite{GKP00}) to the case of $\delta$-twisted conjugacy classes for some automorphism $\delta$ of $(W,S)$, that is, when $\OOO$ is replaced by $\OOO_{\delta}=\{\delta(v)\inv wv \ | \ v\in W\}$ for some $w\in W$. The proofs in \cite{GP93} and \cite{GKP00} involve a case-by-case analysis, with the help of a computer for the exceptional types. In \cite{HN12}, X.~He and S.~Nie gave a uniform (and computer-free) geometric proof of that theorem, which they later generalised, in \cite{HN14}, to the case of an affine Coxeter group $W$. In addition, they showed (for $W$ affine) that 
\begin{enumerate}
\item[(3)]
if $\OOO$ is straight, then any two elements $w,w'$ of minimal length in $\OOO$ are conjugate by a sequence of cyclic shifts,
\end{enumerate}
where $\OOO$ is {\bf straight} if it contains a straight element $w\in W$, that is, such that $\ell_S(w^n)=n\ell_S(w)$ for all $n\in\NN$ (equivalently, every minimal length element of $\OOO$ is straight, see Lemma~\ref{lemma:straight_lemma_4.2}). Note that the straight elements in an arbitrary Coxeter group were characterised in \cite[Theorem~D]{straight}; these elements play an important role in the study of affine Deligne-Lusztig varieties (see \cite{He14}), and also exhibit very useful dynamical properties (see e.g. \cite{simpleKM} or \cite{CH15}). Similar statements to (1) and (2) above were further obtained for an arbitrary Coxeter group $W$, but when $\OOO$ is replaced by some ``partial'' conjugacy class $\OOO=\{v\inv wv \ | \ v\in W_I\}$, for some finite standard parabolic subgroup $W_I\subseteq W$ (see \cite{He07} and \cite{Nie13}). Finally, we showed in \cite[Theorem~A]{straight} that for a certain class of Coxeter groups that includes the right-angled ones, (1) and (2) hold using only cyclic shifts.

\medskip

In this paper, we prove the statements (1), (2) and (3) in full generality, namely, for an arbitrary Coxeter group $W$. Moreover, we actually prove a much more precise version of (2) by introducing a refined notion of ``strong conjugation'', which we call ``tight conjugation'' (see Definition~\ref{definition:tightconj}) --- in particular, if two elements are tightly conjugate, then they are strongly conjugate; when $W$ is finite, the two notions coincide. Here is our main theorem.

\begin{thmintro}\label{thmintro:mainthm}
Let $(W,S)$ be a Coxeter system. Let $\OOO$ be a conjugacy class of $W$, and let $\OOO_{\min}$ be the set of minimal length elements of $\OOO$. Then the following assertions hold:
\begin{enumerate}
\item
For any $w\in\OOO$, there exists an element $w'\in\OOO_{\min}$ that can be obtained from $w$ by a sequence of cyclic shifts.
\item
If $w,w'\in\OOO_{\min}$, then $w$ and $w'$ are tightly conjugate.
\item
If $\OOO$ is straight, then any two elements $w,w'\in\OOO_{\min}$ are conjugate by a sequence of cyclic shifts.
\end{enumerate}
\end{thmintro}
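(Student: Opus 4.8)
The plan is to translate the whole statement into the geometry of the Davis complex $X$ of $(W,S)$, a complete $\CAT$ space of finite dimension on which $W$ acts properly by isometries, together with its structure as a chamber system. Writing $C_0$ for the fundamental chamber and $\dc$ for the gallery distance, one has $\ell_S(v)=\dc(C_0,vC_0)$, and since $\ell_S(v\inv wv)=\dc(vC_0,w\cdot vC_0)$ while $W$ acts transitively on chambers, the minimal length in $\OOO$ equals the combinatorial translation length $\min_C\dc(C,wC)$. Thus $w\in\OOO_{\min}$ if and only if $C_0$ lies in the combinatorial minimal set $\CMin(w)=\{C\in\Ch X : \dc(C,wC)\text{ is minimal}\}$. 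Because $X$ is a finite-dimensional $\CAT$ space every isometry is semisimple, so I would first split into the elliptic case, where $w$ fixes a point of $X$, and the hyperbolic case. In the elliptic case $w$ is conjugate into a finite standard parabolic $W_I$; realizing this conjugation by cyclic shifts and then combining the partial-conjugacy results of \cite{He07,Nie13} with the Geck--Pfeiffer theorem \cite{GP93} inside $W_I$ settles all three assertions there. The genuinely new content is therefore the hyperbolic case, on which I concentrate.

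For (1) I would run a descent argument on the function $f\co C\mapsto\dc(C,wC)$ over the chamber system. The key elementary-conjugation lemma is that for $s\in S$ one has $\ell_S(sws)-\ell_S(w)\in\{-2,0,+2\}$, with $w\mapsto sws$ corresponding to replacing the base chamber $C$ by the $s$-adjacent chamber $sC$; concretely, when $s$ is a left (or right) descent of $w$, the passage $w\mapsto sws$ is exactly an elementary cyclic shift and never increases $f$. I would then show that from any chamber one can reach $\CMin(w)$ along a gallery of such non-increasing moves. The subtle point is that $f$ may have plateaus, so a greedy descent can stall; to rule this out I would use that $\CMin(w)$ is combinatorially convex and that a chamber outside $\CMin(w)$ always admits an adjacent chamber strictly closer to the $\CAT$ minimal set $\Min(w)$ along which $f$ does not increase, the strict decrease being forced eventually by the geometry of the translation axis. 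Translating the resulting gallery back into $W$ yields the required sequence of cyclic shifts.

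For (2), given $w,w'\in\OOO_{\min}$ with $w'=v\inv wv$, both $C_0\in\CMin(w)$ and $vC_0\in\CMin(w)$ hold. Here I would exploit the $\CAT$ structure of $\Min(w)$, which in the hyperbolic case splits as $\RR\times Y$ (translation direction times cross-section) and carries the parallel set described by $\Pc$ and the projection $\proj$. The plan is to transport $C_0$ to $vC_0$ inside $\CMin(w)$ one wall at a time, each crossing chosen transverse to the axis so that the conjugating reflection $x$ satisfies $\ell_S(x\inv w)=\ell_S(x)+\ell_S(w)$ or its mirror, i.e. is an elementary strong conjugation. To upgrade ``strong'' to ``tight'' I would track the $\supp$ of the moves and verify the additional additivity and commutation conditions of Definition~\ref{definition:tightconj}, which the transverse-wall choice is designed to guarantee.

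For (3), if $\OOO$ is straight then by Lemma~\ref{lemma:straight_lemma_4.2} every $w\in\OOO_{\min}$ is straight, so its axis is a bi-infinite gallery with no folding and $\CMin(w)$ acquires a periodic product structure along the axis. In this rigid situation the cross-section moves of (2) cost nothing combinatorially, so every elementary strong conjugation between straight minimal elements can be realized without increasing length, hence by cyclic shifts alone. I expect the main obstacle to be the descent step in (1) for hyperbolic $w$: establishing, in full generality and \emph{without} local finiteness, that the non-increasing moves genuinely connect $C_0$ to $\CMin(w)$ and that the plateaus of $f$ are navigable. This demands a robust structure theory for $\CMin(w)$ --- its convexity, its relation to $\Min(w)$ and to $\Pc$, and the compatibility of $\proj$ with $\dc$ --- which is the technical heart on which all three statements ultimately rest.
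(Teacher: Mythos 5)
Your global framework --- working in the Davis complex, identifying minimal-length conjugates of $w$ with the chambers of $\CMin(w)$ via $\pi_w$, and splitting into the elliptic and hyperbolic cases --- is exactly the paper's. But there are two genuine gaps in the hyperbolic case, which is where all the new content lies. First, for (2) you propose to ``transport $C_0$ to $vC_0$ inside $\CMin(w)$ one wall at a time, each crossing transverse to the axis''; this cannot work, because $\CMin(w)$ need not be gallery-connected (Remark~\ref{remark:CMin_not_convex}) --- that failure is precisely why tight conjugations have to be introduced, and an elementary tight conjugation is not a single wall-crossing but a jump inside a spherical residue $R$ with $\Stab_W(R)=\Stab_W(wR)$, from one chamber of $R\cap\CMin(w)$ to another. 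Realizing such a jump by admissible elementary operations requires the Geck--Kim--Pfeiffer/He--Nie theorem for \emph{twisted} conjugacy classes in the finite Coxeter group $\Stab_W(R)$ (this is Lemma~\ref{lemma:approx}); you invoke \cite{GP93} only in the elliptic case, so this essential input is absent from your hyperbolic argument. Second, your descent for (1) rests on the unproved assertion that a chamber outside $\CMin(w)$ always admits an adjacent chamber, closer to $\Min(w)$, at which the displacement function $C\mapsto\dc(C,wC)$ does not increase. The paper does something quite different: after using cyclic shifts to arrange $\Min(w)\cap C_0\neq\varnothing$ (via \cite[Corollary~3.5]{straight}), it fixes the geodesic $Z=[x_w,vx_u]\subseteq\Min(w)$ and walks along it combining $w$-convexity moves with residue-projection moves; the key technical point making this possible is Proposition~\ref{prop:projection}, that projecting any chamber onto the spherical residue spanned by a germ of the axis does not increase $\dc(D,wD)$. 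Nothing in your sketch substitutes for this proposition, and without it the ``plateaus'' you acknowledge are not navigable.

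Two smaller points. In the elliptic case, conjugating the standard parabolic closure $\Pc(w)=W_I$ to $\Pc(u)=W_J$ must itself be realized by tight conjugations; this requires Deodhar's and Krammer's elementary conjugating elements $w_{K\setminus\{s\}}w_K$ together with a length-additivity computation (Lemma~\ref{lemma:sph_conj}), which your outline does not address. For (3), the correct mechanism is not that the ``cross-section moves cost nothing combinatorially'' but that for straight $w$ the map $\pi_w$ is \emph{constant} on $R\cap\CMin(w)$ for every residue $R$ as above (Lemma~\ref{lemma:Cmin_straight_R}), so the type-(II) jumps degenerate to the identity and only cyclic shifts remain.
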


Note that the proof of Theorem~\ref{thmintro:mainthm} uses the results of \cite{GKP00} (or \cite{HN12}), but does not rely on \cite{HN14}. In particular, we give an alternative, shorter proof that affine Coxeter groups satisfy Theorem~\ref{thmintro:mainthm}.

Recall that an element $w\in W$ is {\bf cyclically reduced} if $\ell_S(w')=\ell_S(w)$ for every $w'\in W$ obtained from $w$ by a sequence of cyclic shifts. Often, this terminology is used instead for elements of minimal length in their conjugacy class. An important reformulation of Theorem~\ref{thmintro:mainthm}(1) is that these two notions in fact coincide. 
\begin{corintro}
An element $w\in W$ is cyclically reduced if and only if it is of minimal length in its conjugacy class.
\end{corintro}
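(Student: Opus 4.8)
The plan is to prove the two implications separately, with essentially all the mathematical weight carried by Theorem~\ref{thmintro:mainthm}(1). Write $\OOO$ for the conjugacy class of $w$ and put $m:=\min_{v\in\OOO}\ell_S(v)$. The first thing I would record is a single elementary observation used throughout: a cyclic shift never leaves $\OOO$ and never lengthens the element. Concretely, if $w=s_1\cdots s_d$ is a reduced decomposition and $w'=s_{r+1}\cdots s_d s_1\cdots s_r$, then
\[
w'=(s_1\cdots s_r)\inv\, w\, (s_1\cdots s_r),
\]
so $w'\in\OOO$, while the right-hand word displays $w'$ as a product of $d=\ell_S(w)$ generators, giving $\ell_S(w')\le\ell_S(w)$.

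For the direction ``minimal length $\Rightarrow$ cyclically reduced'', I would assume $\ell_S(w)=m$ and argue by induction along an arbitrary cyclic-shift sequence $w=w_0,w_1,\dots,w_n$. Supposing $\ell_S(w_{i-1})=m$, the observation above gives $w_i\in\OOO$ and $\ell_S(w_i)\le m$; since $w_i\in\OOO$ also forces $\ell_S(w_i)\ge m$, we get $\ell_S(w_i)=m$. One point I would be careful about here is that the word representing $w_i$ then has length $m=\ell_S(w_i)$ and is therefore reduced, which is what legitimises taking a further cyclic shift of $w_i$. It follows that every element reachable from $w$ by cyclic shifts has length $m=\ell_S(w)$, i.e. $w$ is cyclically reduced.

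For the converse, I would simply invoke Theorem~\ref{thmintro:mainthm}(1): there is some $w'\in\OOO_{\min}$, hence with $\ell_S(w')=m$, reachable from $w$ by a sequence of cyclic shifts. If $w$ is cyclically reduced then $\ell_S(w')=\ell_S(w)$ by definition, whence $\ell_S(w)=m$ and $w$ lies in $\OOO_{\min}$.

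I do not expect a genuine obstacle: the forward implication is elementary, and the entire substance of the corollary is the reachability statement of Theorem~\ref{thmintro:mainthm}(1), which says precisely that minimal-length representatives can be reached by cyclic shifts alone. The only delicate bookkeeping is the inductive check that each intermediate element of a cyclic-shift sequence stays reduced, so that the sequence is genuinely one of cyclic shifts in the sense of the definition.
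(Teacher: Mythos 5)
Your proposal is correct and matches the paper's intent exactly: the paper presents this corollary as a direct reformulation of Theorem~\ref{thmintro:mainthm}(1), with the forward direction (minimal length implies cyclically reduced) being the elementary observation that a cyclic shift stays in the conjugacy class without increasing length, and the converse being precisely the reachability statement of Theorem~\ref{thmintro:mainthm}(1). Your only extra care --- checking that intermediate words stay reduced --- is harmless but not strictly needed, since each step of a cyclic-shift sequence starts from \emph{some} reduced decomposition of the current element, which always exists.
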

This proves a conjecture of A.~Cohen (see \cite[Conjecture 2.18]{Coh94}).

The proof of Theorem~\ref{thmintro:mainthm} is of geometric nature, and uses the Davis complex $X$ of $(W,S)$ --- here, we assume that $S$ is finite, a safe assumption for the study of Theorem~\ref{thmintro:mainthm} (see Remark~\ref{remark:finite_rank_reduction}). This is a $\CAT$ cellular complex on which $W$ acts by cellular isometries. For instance, if $W$ is affine, then $X$ is just the standard geometric realisation of the Coxeter complex $\Sigma$ of $(W,S)$, and the $\CAT$ metric $\dist\co X\times X\to\RR_+$ is the usual Euclidean metric (see Example~\ref{example:Davis_complex}). For an element $w\in W$, the subset $\Min(w)\subseteq X$ of all $x\in X$ such that $\dist(x,wx)$ is minimal will play an important role; it will also be crucial to investigate its combinatorial analogue $\CMin(w)$ (see \S\ref{section:TCC}), as highlighted in Remark~\ref{remark:CMin_in_Cw}. As a byproduct of our proofs, we are able to relate these two notions of ``minimal displacement set'' for $w$ (see \S\ref{section:COMAC}).
\begin{corintro}\label{corintro:comparison_Min}
Let $w\in W$. Then $\Min(w)\subseteq\CMin(w)$, and $\CMin(w)$ is at bounded Hausdorff distance from $\Min(w)$.
\end{corintro}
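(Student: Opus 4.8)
The plan is to prove the two assertions separately: first the inclusion $\Min(w)\subseteq\CMin(w)$, and then the existence of a constant $R$ with $\CMin(w)\subseteq N_R(\Min(w))$; together with the inclusion this yields the bounded Hausdorff distance. Throughout I would use that $w$ acts as a semisimple isometry of the $\CAT$ complex $X$, so that $\Min(w)\neq\emptyset$ is closed, convex and $\langle w\rangle$-invariant, and that $\CMin(w)$, being governed by the combinatorial displacement $C\mapsto\dc(C,wC)$, is $\langle w\rangle$-invariant as well. The elliptic case ($w$ of finite order) is instructive and essentially contained in the hyperbolic one: for a reflection $s$, every chamber $C\in\CMin(s)$ is forced to share the wall $\mathrm{Fix}(s)=\Min(s)$, so both properties are immediate, and I would record this as a warm-up fixing the mechanism.

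For the inclusion I would translate everything into the language of walls. Fix $x\in\Min(w)$ and a chamber $C$ with $x\in\overline C$ (or work with $\supp(x)$). The central comparison is that the walls separating $C$ from $wC$ are, up to a bounded discrepancy coming from the cells met by the geodesic, exactly the walls crossed by the $\CAT$ geodesic $[x,wx]$, while $\dc(C,wC)$ counts precisely the former. Minimality of $\dist(x,wx)$ over $X$ means the geodesic $[x,wx]$ runs along a fundamental domain of the axis and crosses only the \emph{$w$-essential} walls, i.e. those crossed by $\Min(w)$. Combining these facts, I would show that no wall is wasted, so that $\dc(C,wC)$ attains the combinatorial minimum $\min_D\dc(D,wD)$, i.e. $x\in\CMin(w)$. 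The only delicate point here is the boundary bookkeeping when $x$ lies on a wall or in a lower-dimensional cell, handled by passing to a generic nearby point of $\Min(w)$ and invoking convexity.

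For the neighborhood bound it suffices, after translating by $g$, to bound $\dist(x_0,\Min(w'))$ uniformly over the minimal-length conjugates $w'=g\inv wg$, where $x_0$ is the centre of the fundamental chamber; indeed $C=gC_0$ lies in $\CMin(w)$ exactly when $\ell_S(w')$ is minimal in $\OOO$, and $\dist(gx_0,\Min(w))=\dist(x_0,\Min(w'))$. I would establish this bound via the combinatorial projection $\proj_{\Min(w')}$ onto the convex set $\Min(w')$: the key lemma is that this projection does not increase the combinatorial displacement and, crucially, strictly decreases it whenever the chamber lies outside a fixed neighborhood of $\Min(w')$. The mechanism is again through walls: a chamber far from $\Min(w')$ in a direction transverse to the axis is separated from $\Min(w')$ by a wall that is \emph{not} $w'$-essential, and such a non-essential separating wall can be removed to shorten the gallery realising the displacement, contradicting minimality.

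The main obstacle is precisely this last step, namely upgrading combinatorial minimality to $\CAT$ proximity to $\Min(w)$. Purely metric $\CAT$ reasoning does not suffice, since near-minimal displacement need not force proximity to the minimal set in the presence of flat strips, and $\Min(w)$ may genuinely be unbounded transverse to the axis (already for a translation in an affine group, where $\Min(w)$ and $\CMin(w)$ both fill $X$). The resolution is that combinatorial minimality is strictly stronger than $\CAT$ near-minimality: it prohibits crossing any non-essential wall, and this rigidity, together with the combinatorial convexity of $\CMin(w)$ and the structural description underlying the proof of Theorem~\ref{thmintro:mainthm}, matches the transverse extent of $\CMin(w)$ to that of $\Min(w)$ up to the bounded thickness of the cells. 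Making the "remove a non-essential wall" step precise, and checking that the discarded walls are uniformly bounded in number, is where the bulk of the work lies.
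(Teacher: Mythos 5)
Your overall decomposition (prove the inclusion $\Min(w)\subseteq\CMin(w)$, then bound $\CMin(w)$ inside a neighbourhood of $\Min(w)$) matches the paper's, but both halves of your argument have genuine gaps where the paper does something different. For the inclusion, you propose to fix $x\in\Min(w)$, take a chamber $C$ with $x\in C$, and argue that $\dc(C,wC)$ essentially counts the $w$-essential walls crossed by $[x,wx]$ and that this is the combinatorial minimum. But the combinatorial minimum is \emph{not} the number of essential walls: writing $w=n_Iw_I$ with $n_I\in N_I$ and $w_I\in W_I$, where $W_I$ is the stabiliser of the spherical residue $R_\sigma$ supporting a generic segment $\sigma$ of the axis, one has $\min_D\dc(D,wD)=\ell(n_I)+\ell(w_I)$, and the $\ell(w_I)$ term is contributed by \emph{non-essential} walls (walls containing the axis). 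Your ``no wall is wasted'' step must account for exactly this contribution, and that accounting is the content of Proposition~\ref{prop:projection}, which is a genuinely nontrivial argument (it passes to a power $w^n=n_I^n$, uses straightness of $n_I$, and compares wall-crossing counts along a concatenated gallery). Note also that the paper does not claim that \emph{every} chamber containing $x$ is combinatorially minimal; it exhibits one, namely $\proj_{R_\sigma}(D)$ for some $D\in\CMin(w)$ (respectively $\proj_{R_x}(D)$ in the finite-order case, Lemma~\ref{lemma:proj_finite_order}). Your ``pass to a generic nearby point'' remark does not substitute for this specific choice of chamber.

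For the Hausdorff bound, your mechanism --- a chamber far from $\Min(w)$ is separated from it by a non-essential wall, which can then be ``removed'' to shorten a gallery from $D$ to $wD$ --- does not work as stated: a wall separating $D$ from $\Min(w)$ need not separate $D$ from $wD$ at all (it may well be $\langle w\rangle$-invariant with $D$ and $wD$ on the same side), in which case it contributes nothing to $\dc(D,wD)$ and there is nothing to remove from a minimal gallery; you yourself flag this step as ``where the bulk of the work lies''. The paper avoids this entirely with a short cocompactness argument (Lemma~\ref{lemma:centraliser_cocompact}): the centraliser $C_W(w)$ acts cocompactly on $\Min(w)$ by Ruane's theorem, and on $\CMin(w)$ by pigeonhole --- chambers $v_nD\in\CMin(w)$ yield conjugates $v_n\inv wv_n$ all of the same minimal length, hence finitely many of them, hence $v_nv_0\inv\in C_W(w)$ along a subsequence --- and two nested nonempty $C_W(w)$-invariant sets on which $C_W(w)$ acts cocompactly are automatically at bounded Hausdorff distance. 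I would encourage you to look for an argument of this ``finiteness modulo the centraliser'' type rather than trying to control the walls transverse to $\Min(w)$ directly.
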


Note that, while $\Min(w)$ is always connected (in the $\CAT$ sense), its combinatorial analogue $\CMin(w)$ need not be (gallery-)connected, and this is precisely the reason why cyclic shifts are not sufficient to describe the conjugacy classes in $W$, and why one needs to also consider ``tight conjugations'' (see Remark~\ref{remark:CMin_not_convex}).

\medskip

We conclude the introduction with a short roadmap to the proof of Theorem~\ref{thmintro:mainthm}. Let $w\in W$, and let $\OOO$ (resp. $\OOO_{\min}$) denote its conjugacy class (resp. the set of elements of minimal length in $\OOO$). Let $\Ch(\Sigma)$ be the set of chambers of the Coxeter complex $\Sigma$ ($\Ch(\Sigma)$ can be $W$-equivariantly identified with the set of vertices $\{v \ | \ v\in W\}$ of the Cayley graph of $(W,S)$). 

The first step is to view the elements of $\OOO$ and $\OOO_{\min}$ geometrically, as chambers of $\Sigma$: we consider the map $\pi_w\co \Ch(\Sigma)\to W: v\mapsto v\inv w v$, which satisfies $\pi_w(\Ch(\Sigma))=\OOO$ and $\pi_w\inv(\OOO_{\min})=\CMin(w)$. The second step is to interpret the operations of cyclic shifts and tight conjugations geometrically, at the level of $\Ch(\Sigma)$, by defining two ``elementary geometric operations'', say of type (I) and (II), allowing to pass from one chamber $C\in\Ch(\Sigma)$ to another chamber $D\in\Ch(\Sigma)$, in such a way that passing from $C$ to $D$ with an operation of type (I) implies that one can pass from $\pi_w(C)$ to $\pi_w(D)$ using cyclic shifts, and passing from $C$ to $D$ with an operation of type (II) implies that one can pass from $\pi_w(C)$ to $\pi_w(D)$ using tight conjugations; this strategy is implemented in Sections~\ref{section:TCC} and \ref{section:TCCw}, and makes use of the analogue of Theorem~\ref{thmintro:mainthm} for twisted conjugacy classes in finite Coxeter groups established in \cite{GKP00} and \cite{HN12}. Theorem~\ref{thmintro:mainthm} then amounts to showing that one can pass from the chamber $C_0:=\{1_W\}$ (representing $\pi_w(C_0)=w$) to any other chamber $C$ by a sequence of geometric operations of type (I) and (II) (see Section~\ref{section:TCPIWS}). This geometric formulation of the problem allows one to take advantage of the tools provided by $\CAT$ geometry, and of the specific properties of Davis complexes described in \S\ref{subsection:DC}--\ref{subsection:W}. Finally, note that the analogue of Theorem~\ref{thmintro:mainthm} for untwisted conjugacy classes in finite Coxeter groups, first established in \cite{GP93}, is also used to prove Theorem~\ref{thmintro:mainthm} when $w$ has finite order.

\section{Preliminaries}
\subsection{Basic definitions}\label{subsection:BD}
Basics on Coxeter groups and complexes can be found in \cite[Chapters~1--3]{BrownAbr}. The notions introduced below are illustrated on Figure~\ref{figure:A2tilde} (see Example~\ref{example:A2tilde}).

Throughout this paper, $(W,S)$ denotes a Coxeter system of finite rank (see Remark~\ref{remark:finite_rank_reduction}). We let $\Sigma=\Sigma(W,S)$ be the associated Coxeter complex, with set of roots (or half-spaces) $\Phi$. Let also $C_0:=\{1_W\}$ be the fundamental chamber of $\Sigma$, and $\Pi:=\{\alpha_s \ | \ s\in S\}$ be the corresponding set of simple roots (i.e. the roots containing $C_0$ and whose wall is a wall of $C_0$). Write $\Ch(\Sigma):=\{wC_0 \ | \ w\in W\}$ for the set of chambers of $\Sigma$. We will often identify a chamber subcomplex $A$ of $\Sigma$ with its underlying set $\Ch(A)\subseteq\Ch(\Sigma)$ of chambers.

Two chambers $D,E\in\Ch(\Sigma)$ are {\bf $s$-adjacent} for some $s\in S$ if they are $s$-adjacent in the Cayley graph $\mathrm{Cay}(W,S)$. A {\bf gallery} $\Gamma$ between two chambers $D,E\in\Ch(\Sigma)$ is a sequence of chambers $D=D_0,D_1,\dots,D_r=E$ such that, for each $i\in\{1,\dots,r\}$, the chamber $D_{i-1}$ is (distinct from and) $s_i$-adjacent to $D_i$ for some $s_i\in S$. The sequence $(s_1,\dots,s_r)\in S^r$ is the {\bf type} of $\Gamma$, and $\ell(\Gamma):=r$ the {\bf length} of $\Gamma$. The gallery $\Gamma$ is {\bf minimal} if it is a gallery of minimal length between $C$ and $D$; in this case, we set $\dc(C,D):=\ell(\Gamma)$ and call it the {\bf chamber distance} between $C$ and $D$. We let
$$\Gamma(C,D):=\{E\in\Ch(\Sigma) \ | \ \dc(C,D)=\dc(C,E)+\dc(E,D)\}$$
denote the set of chambers on a minimal gallery from $C$ to $D$. If $C=vC_0$ and $D=wvC_0$ for some $v,w\in W$, there is a bijective correspondence between minimal galleries $\Gamma$ from $C$ to $D$ and reduced expressions (on the alphabet $S$) for $v\inv wv$, mapping the gallery $\Gamma$ to its type. In particular, if we again denote by $\ell\co W\to \NN$ the word length on $W$ with respect to $S$, then $\ell(v\inv wv)$ coincides with $\ell(\Gamma)$, or else with the number of walls crossed by $\Gamma$ (i.e. the number of walls separating $C$ from $D$).

To each simplex $\sigma$ of $\Sigma$, one associates its corresponding {\bf residue} $R_{\sigma}$, which is the set of chambers of $\Sigma$ containing $\sigma$. A {\bf wall of $R_{\sigma}$} is a wall of $\Sigma$ containing $\sigma$. For a subset $I\subseteq S$, we let $W_I:=\langle I\rangle\subseteq W$ denote the {\bf standard parabolic subgroup} of type $I$. The {\bf parabolic subgroups} of $W$ are then the conjugates of the standard parabolic subgroups, or equivalently, the stabilisers in $W$ of some simplex (resp. residue) of $\Sigma$. The simplex $\sigma$ (resp. the residue $R_{\sigma}$) is {\bf spherical} if its stabiliser $P_{\sigma}$ in $W$ is finite; it is {\bf standard} if $P_{\sigma}$ is a standard parabolic subgroup (equivalently, if $\sigma$ is a face of $C_0$, resp. if $C_0\in R_{\sigma}$). Thus, if $\sigma$ is a face of $vC_0$ for some $v\in W$, then $P_{\sigma}=vW_Iv\inv$ for some subset $I\subseteq S$. For any $w\in W$, there is a smallest parabolic subgroup $\Pc(w)$ containing $w$, called the {\bf parabolic closure} of $w$.

For each $I\subseteq S$, we set $\Pi_I:=\{\alpha_s \ | \ s\in I\}$. Let $N_I$ be the stabiliser in $W$ of $\Pi_I$. Note that the conjugation action of any $n_I\in N_I$ on $W_I$ induces an automorphism of $W_I$ preserving $I$ (called a {\bf diagram automorphism}). We write $N_W(W_I)$ for the normaliser of $W_I$ in $W$, and we call $I$ {\bf spherical} if $W_I$ is finite. The following lemma follows from \cite[Lemma~5.2]{Lus77}. 
\begin{lemma}\label{lemma:Lustzig}
Let $I\subseteq S$. Then $N_W(W_I)=W_I\rtimes N_I$. Moreover, 
\begin{equation}\label{eqn:Lustzig}
\ell(w_In_I)=\ell(w_I)+\ell(n_I)\quad\textrm{for all $w_I\in W_I$ and $n_I\in N_I$.}
\end{equation}
\end{lemma}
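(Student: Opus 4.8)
The plan is to derive both claims from the standard theory of minimal-length coset representatives. Let $\Phi^+$ be the set of positive roots (those containing $C_0$), and let $\Phi_I:=\{\alpha\in\Phi \mid s_\alpha\in W_I\}$ be the root subsystem attached to $W_I$, with positive part $\Phi_I^+:=\Phi_I\cap\Phi^+$. Recall that the minimal-length representatives of the right cosets $W_I\backslash W$ form the set ${}^IW=\{n\in W \mid n^{-1}(\Pi_I)\subseteq\Phi^+\}$, and that $n\in{}^IW$ is equivalent to $\ell(un)=\ell(u)+\ell(n)$ for all $u\in W_I$. I would first observe that $N_I\subseteq{}^IW$: indeed, for $n_I\in N_I$ one has $n_I^{-1}\in N_I$, so $n_I^{-1}(\Pi_I)=\Pi_I\subseteq\Phi^+$. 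Applying the above with $u=w_I$ and $n=n_I$ yields the length formula $\ell(w_In_I)=\ell(w_I)+\ell(n_I)$ at once.

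Next I would settle the easy part of the decomposition. The intersection $W_I\cap N_I$ is trivial: an element of $W_I$ stabilising $\Pi_I$ maps $\Phi_I^+$ onto $\Phi_I^+$ (by the combinatorial step described below), hence sends no positive root of $\Phi_I$ to a negative one, hence has length $0$. Conversely, $N_I$ normalises $W_I$, since for $n\in N_I$ and $s\in I$ the conjugate $nsn^{-1}=s_{n\alpha_s}$ is again a simple reflection of $W_I$ (as $n\alpha_s\in\Pi_I$), so that $n$ permutes $I$. Since $W_I\trianglelefteq N_W(W_I)$ by definition, these facts give the internal semidirect product $W_I\rtimes N_I\subseteq N_W(W_I)$.

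The substance is the reverse inclusion $N_W(W_I)\subseteq W_IN_I$. Given $g\in N_W(W_I)$, I would write $g=w_In$ where $w_I\in W_I$ and $n\in{}^IW$ is the minimal-length representative of the coset $W_Ig$, and aim to show $n\in N_I$. Since $n=w_I^{-1}g$ again normalises $W_I$, conjugation by $n$ carries reflections of $W_I$ to reflections of $W_I$, so $n$ permutes $\Phi_I$ (via $ns_\alpha n^{-1}=s_{n\alpha}$). Combining $n\in{}^IW$, i.e. $n^{-1}(\Pi_I)\subseteq\Phi^+$, with $n^{-1}\Phi_I=\Phi_I$ then yields $n^{-1}(\Pi_I)\subseteq\Phi_I^+$.

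The hard part --- and the only genuinely non-formal step --- is to promote ``$n^{-1}(\Pi_I)\subseteq\Phi_I^+$'' to ``$n^{-1}(\Pi_I)=\Pi_I$''. I expect to argue root-theoretically: any $\beta\in\Phi_I^+$ is a non-negative combination of $\Pi_I$, so $n^{-1}(\beta)$ is a root of $\Phi_I$ that is a non-negative combination of the positive roots $n^{-1}(\Pi_I)$, hence has non-negative coordinates and lies in $\Phi_I^+$; thus $n^{-1}(\Phi_I^+)\subseteq\Phi_I^+$ and, by negation, $n^{-1}(\Phi_I^-)\subseteq\Phi_I^-$. As $n^{-1}$ is a bijection of $\Phi_I=\Phi_I^+\sqcup\Phi_I^-$ respecting each part, it must permute $\Phi_I^+$ (a purely set-theoretic step valid even when $W_I$ is infinite), and therefore permute the indecomposable positive roots, which are exactly the simple roots $\Pi_I$. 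Hence $n^{-1}\in N_I$, so $n\in N_I$ and $g=w_In\in W_IN_I$. This reproduces the content of \cite[Lemma~5.2]{Lus77}: the care is concentrated in this final positivity argument, while everything else reduces to routine manipulations with coset representatives.
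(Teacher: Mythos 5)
Your proof is correct, but note that the paper does not actually prove this lemma: it simply records it as a consequence of \cite[Lemma~5.2]{Lus77}. What you have written is, in substance, the standard argument underlying that citation (the one found in Lusztig and in Howlett's work on normalisers of parabolics): reduce to the minimal-length representative $n\in{}^IW$ of the coset $W_Ig$, use $nW_In^{-1}=W_I$ to see that $n$ permutes $\Phi_I$, combine with $n^{-1}(\Pi_I)\subseteq\Phi^+$ to get $n^{-1}(\Phi_I^+)=\Phi_I^+$, and conclude $n^{-1}(\Pi_I)=\Pi_I$. The length formula then falls out of $N_I\subseteq{}^IW$ exactly as you say, and your verifications that $W_I\cap N_I=1$ and that $N_I$ normalises $W_I$ are fine.

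The only step that deserves a word more is the last one: you invoke the identification of $\Pi_I$ with the indecomposable elements of $\Phi_I^+$. The inclusion ``simple $\Rightarrow$ indecomposable'' is immediate from the unit-length normalisation of roots, but the converse (every non-simple positive root decomposes as a positive combination of positive roots) requires the depth/positivity lemma $\beta=s_t\beta+2B(\beta,\alpha_t)\alpha_t$ with $B(\beta,\alpha_t)>0$, which is true for arbitrary (possibly infinite) Coxeter groups but is not free. You can sidestep it entirely: if $n^{-1}(\alpha_s)=\sum_{t\in I}c_t\alpha_t$ had at least two nonzero coefficients, then applying $n$ and using $n(\Phi_I^+)=\Phi_I^+$ would express the indecomposable root $\alpha_s$ as a positive combination of at least two \emph{distinct} positive roots (distinct by injectivity of $n$), which is impossible; since a positive root of $\Phi_I$ with a single nonzero coordinate is a simple root, this gives $n^{-1}(\Pi_I)\subseteq\Pi_I$ directly, and the same argument for $n$ yields equality. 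With that (minor) point addressed, your proof is a complete, self-contained replacement for the citation.
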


\begin{figure}

  \centering
  \includegraphics[trim = 37mm 195mm 93mm 27mm, clip, width=9cm]{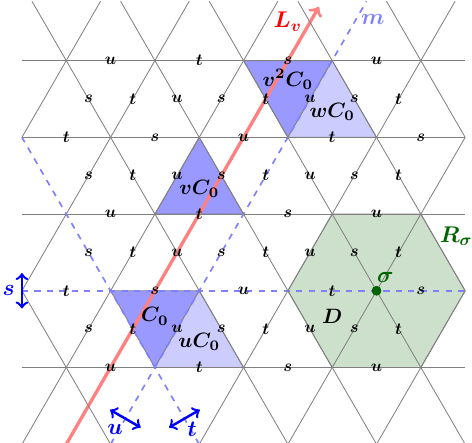}
  \captionof{figure}{Coxeter complex of type $\widetilde{A}_2$}
  \label{figure:A2tilde}

\end{figure}

\begin{example}\label{example:A2tilde}
For the benefit of the reader unfamiliar with Coxeter groups and complexes, we illustrate the above notions on an example.
Consider the (affine) Coxeter group $W=W(\widetilde{A}_2)$ of type $\widetilde{A}_2$, with standard generating set $S=\{s,t,u\}$ and presentation $W=\langle s,t,u \ | \ s^2=t^2=u^2=1=(st)^3=(su)^3=(tu)^3\rangle$.

The Coxeter complex $\Sigma=\Sigma(W,S)$ is then the simplicial complex associated to the tesselation of the Euclidean plane by congruent equilateral triangles (see Figure~\ref{figure:A2tilde}). Fixing a fundamental chamber (i.e. a triangle) $C_0$, the generators $s,t,u$ act as orthogonal reflections across the walls (i.e. lines) delimiting $C_0$ (see the dashed lines on Figure~\ref{figure:A2tilde}). The $W$-action on $\Sigma$ is then simply transitive of the set $\Ch(\Sigma)$ of chambers. The roots $\alpha_s,\alpha_t,\alpha_u$ are the half-spaces respectively delimited by the walls of $s,t,u$ (i.e. the lines fixed by $s,t,u$) and containing $C_0$.

To each codimension $1$ face of $C_0$ (i.e. edge of $C_0$, say contained in the wall of $x\in S$), one attributes its type $x\in S$. One then extends this labelling to all edges by requiring the $W$-action to be type-preserving. Two distinct chambers are then $x$-adjacent if they share a common edge of type $x$. Together with these $x$-adjacency relations, $\Ch(\Sigma)$ then coincides with the Cayley graph of $(W,S)$.

One can reconstruct $\Sigma$ group-theoretically by attaching to each face of $C_0$ its stabiliser in $W$: the chamber $C_0$ (together with its faces) is isomorphic to the poset of standard parabolic subgroups, ordered by the opposite of the inclusion relation (indeed, $\Stab_W(C_0)=W_{\varnothing}=\{1\}$, the stabiliser of the edge of $C_0$ labelled $x\in S$ is $W_{\{x\}}$, the stabiliser of the vertex of $C_0$ at the intersection of the edges labelled $x$ and $y$ is $W_{\{x,y\}}$, and the stabiliser of the empty simplex is $W$). Using the $W$-action, $\Sigma$ can thus be defined as the poset $\{wW_I \ | \ w\in W, \ I\subseteq S\}$, ordered by the opposite of the inclusion relation (the $W$-action being by left translation).

An example of $0$-dimensional simplex (i.e. vertex) $\sigma$, as well as the corresponding residue $R_{\sigma}$, are pictured on Figure~\ref{figure:A2tilde}. Since $D=ustuC_0\in R_{\sigma}$, the stabiliser of $R_{\sigma}$ is the parabolic subgroup $P_{\sigma}=(ustu)W_{\{s,t\}}(ustu)\inv$.

Finally, note that the element $w:=sutsutu$ commutes with $u$, and hence $w\in N_W(W_I)$ for $I=\{u\}$. The decomposition $w=w_In_I$ with $w_I\in W_I$ and $n_I\in N_I$ provided by Lemma~\ref{lemma:Lustzig} is then given by $w_I=u$ and $n_I=sutsut$. 
\end{example}

\subsection{Straight elements}
An element $w\in W$ is called {\bf straight} if $\ell(w^n)=n\ell(w)$ for all $n\in\NN$. We record for future reference the following basic properties of straight elements.

\begin{lemma}[{\cite[Lemma~4.1]{straight}}]\label{lemma:straight_lemma_4.1}
Let $w\in W$ be straight. Then $w$ is of minimal length in its conjugacy class. Moreover, if $w\in N_W(W_I)$ for some spherical subset $I\subseteq S$, then $w\in N_I$.
\end{lemma}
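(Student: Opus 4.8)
The plan is to treat the two assertions separately, in both cases exploiting only the subadditivity and conjugation stability of the word length $\ell$, together with Lemma~\ref{lemma:Lustzig} for the second part.

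For the first assertion, I would argue asymptotically. Let $w'=x\inv wx$ be any conjugate of $w$, with $x\in W$. Since $w^n=xw'^nx\inv$, the triangle inequality for $\ell$ gives $\ell(w^n)\le \ell(w'^n)+2\ell(x)$, while subadditivity gives $\ell(w'^n)\le n\ell(w')$. Combining these with the straightness hypothesis $\ell(w^n)=n\ell(w)$ yields
\[
n\ell(w)-2\ell(x)\le \ell(w'^n)\le n\ell(w').
\]
Dividing by $n$ and letting $n\to\infty$ produces $\ell(w)\le\ell(w')$, so that $w$ is of minimal length in its conjugacy class.

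For the second assertion, assume $w\in N_W(W_I)$ with $I$ spherical, and write $w=w_In_I$ with $w_I\in W_I$ and $n_I\in N_I$ as provided by Lemma~\ref{lemma:Lustzig}. Let $\delta$ denote the diagram automorphism of $W_I$ given by conjugation by $n_I$. Expanding $w^n=(w_In_I)^n$ and collecting the $N_I$-factors to the right, an easy induction gives $w^n=u_n\,n_I^n$ with $u_n:=w_I\,\delta(w_I)\cdots\delta^{n-1}(w_I)\in W_I$. Since $u_n\in W_I$ and $n_I^n\in N_I$, the length formula \eqref{eqn:Lustzig} yields $\ell(w^n)=\ell(u_n)+\ell(n_I^n)$, whereas straightness and the same formula applied to $w=w_In_I$ give $\ell(w^n)=n\ell(w)=n\ell(w_I)+n\ell(n_I)$. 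As $I$ is spherical, $W_I$ is finite, so $\ell(u_n)\le M$ for all $n$, where $M:=\max_{v\in W_I}\ell(v)<\infty$; moreover $\ell(n_I^n)\le n\ell(n_I)$. Subtracting, we obtain $n\ell(w_I)\le \ell(u_n)\le M$ for every $n$, whence $\ell(w_I)=0$, i.e. $w_I=1_W$ and $w=n_I\in N_I$.

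I expect the main technical point to be the bookkeeping in the second part: one must verify that conjugation by $n_I$ indeed stabilises $W_I$, so that each $\delta^k(w_I)$ lands in $W_I$ and hence $u_n\in W_I$, and that the factorisation $w^n=u_n\,n_I^n$ falls under the length-additivity of \eqref{eqn:Lustzig}. Both follow from Lemma~\ref{lemma:Lustzig}, but the decisive leverage is the finiteness of $W_I$, which caps $\ell(u_n)$ while the competing quantity $n\ell(w_I)$ grows linearly; this is precisely the step where the sphericity hypothesis enters.
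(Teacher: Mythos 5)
Your proof is correct, and it is essentially the argument of the cited source \cite[Lemma~4.1]{straight} (the present paper only quotes the lemma without reproducing a proof): the first assertion via the asymptotic comparison $n\ell(w)=\ell(w^n)\le \ell(w'^n)+2\ell(x)\le n\ell(w')+2\ell(x)$, and the second via the factorisation $w^n=u_n n_I^n$ with $u_n\in W_I$, where the length formula \eqref{eqn:Lustzig} forces $n\ell(w_I)\le\ell(u_n)\le\max_{v\in W_I}\ell(v)$ and hence $w_I=1$. All the auxiliary facts you flag (that $N_I$ normalises $W_I$, so $u_n\in W_I$, and that \eqref{eqn:Lustzig} applies to $u_n n_I^n$) are indeed supplied by Lemma~\ref{lemma:Lustzig}, so there is no gap.
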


\begin{lemma}[{\cite[Lemma~4.2]{straight}}]\label{lemma:straight_lemma_4.2}
Let $v,w\in W$ be such that $\ell(v\inv w v)=\ell(w)$. Then $w$ is straight if and only if $v\inv wv$ is straight.
\end{lemma}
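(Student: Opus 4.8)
The plan is to reformulate straightness in terms of the \emph{stable length} (or translation length) $\ell_\infty(w):=\lim_{n\to\infty}\ell(w^n)/n$, and to exploit the fact that this quantity is a conjugacy invariant whereas the hypothesis controls the ordinary length $\ell$. First I would check that $\ell_\infty(w)$ is well defined: since $\ell$ is subadditive, $\ell(w^{m+n})\le\ell(w^m)+\ell(w^n)$, so Fekete's subadditivity lemma guarantees that $\ell(w^n)/n$ converges to $\inf_n\ell(w^n)/n=\ell_\infty(w)$. Taking $m=1$ repeatedly gives $\ell(w^n)\le n\ell(w)$ for all $n$, hence $\ell_\infty(w)\le\ell(w)$, with this value realised as the infimum over $n$.

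Next I would record the key characterisation: \emph{$w$ is straight if and only if $\ell_\infty(w)=\ell(w)$}. The forward implication is immediate from the definition of straightness. For the converse, the equality $\ell_\infty(w)=\ell(w)$ forces $\ell(w)$ to be a lower bound for every term $\ell(w^n)/n$ (being the infimum of that sequence), i.e. $\ell(w^n)\ge n\ell(w)$; combined with the reverse inequality $\ell(w^n)\le n\ell(w)$ noted above, this yields $\ell(w^n)=n\ell(w)$, so $w$ is straight.

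Then I would prove that $\ell_\infty$ is invariant under conjugation. Writing $(v\inv wv)^n=v\inv w^n v$ and using $\ell(v\inv)=\ell(v)$ together with subadditivity applied in both directions, one obtains $|\ell((v\inv wv)^n)-\ell(w^n)|\le 2\ell(v)$ for all $n$; dividing by $n$ and letting $n\to\infty$ gives $\ell_\infty(v\inv wv)=\ell_\infty(w)$. Combining everything, set $w':=v\inv wv$. By conjugation invariance $\ell_\infty(w')=\ell_\infty(w)$, and by hypothesis $\ell(w')=\ell(w)$; hence $\ell_\infty(w)=\ell(w)$ holds if and only if $\ell_\infty(w')=\ell(w')$ holds, which by the characterisation means precisely that $w$ is straight if and only if $w'$ is.

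I expect no serious obstacle: the only mildly technical point is the existence of the limit defining $\ell_\infty$ and its identification with the infimum, which is the standard Fekete argument, while the rest is elementary length bookkeeping. The essential conceptual input is the decoupling of the two invariants --- ordinary length is pinned down by the hypothesis $\ell(w)=\ell(v\inv wv)$, whereas stable length is automatically conjugation-invariant --- so that the bounded additive error $2\ell(v)$ washes out in the limit.
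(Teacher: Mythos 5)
Your proof is correct: the identification of straightness with the equality $\ell(w)=\ell_\infty(w)$ via Fekete's lemma, the conjugation-invariance of $\ell_\infty$ up to the bounded error $2\ell(v)$, and the hypothesis $\ell(v\inv wv)=\ell(w)$ fit together exactly as you say. The paper itself only cites this lemma from an external reference without reproducing a proof, but your stable-length argument is the standard one for this statement and there is nothing to add.
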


\subsection{Projections}
The general reference for this section is \cite[Chapter~21]{MPW15}. 
Given a chamber $D\in\Ch(\Sigma)$ and a residue $R$, there is a unique chamber $E\in R$ at minimal distance from $D$, called the {\bf projection} of $D$ on $R$, and denoted $\proj_R(D)$. Alternatively, $\proj_R(D)$ is the unique chamber $E$ of $R$ such that $D$ and $E$ lie on the same side of every wall of $R$. In particular, one has the following {\bf gate property}:
$$\dc(D,E)=\dc(D,\proj_R(D))+\dc(\proj_R(D),E)\quad\textrm{for any chamber $E\in R$.}$$
As $\proj_R\co\Ch(\Sigma)\to R$ maps galleries to galleries, it does not increase the chamber distance.
Two residues $R,R'$ are {\bf parallel} if the projection map $\proj_R|_{R'}\co R'\to R$ is bijective (in which case $\proj_{R'}|_{R}\co R\to R'$ is its inverse). Equivalently, $R$ and $R'$ are parallel if and only if they have the same walls if and only if they have the same stabiliser in $W$. In that case, $\dc(D,\proj_{R}(D))$ is independent of the choice of chamber $D\in R'$. Finally, if $R\subseteq R'$, then $\proj_{R}(D)=\proj_R(\proj_{R'}(D))$ for all $D\in\Ch(\Sigma)$.

\begin{example}
Keeping the notations of Example~\ref{example:A2tilde}, the projection of $C_0$ on the residue $R_{\sigma}$ is the chamber $D$. Let now $\tau_1$ be the common edge shared by $C_0$ and $uC_0$, and let $\tau_2$ be the common edge shared by $wC_0$ and $uwC_0$ (also denoted $v^2C_0$ on Figure~\ref{figure:A2tilde}, where $v:=sut$). Then the residues $R_{\tau_1}=\{C_0,uC_0\}$ and $R_{\tau_2}=\{wC_0,uwC_0\}$ have the same walls (i.e. the wall $m$ of $u$), and are therefore parallel. The bijection between $R_{\tau_1}$ and $R_{\tau_2}$ induced by the projections identifies $C_0$ with $uwC_0$ and $uC_0$ with $wC_0$.
\end{example}

\begin{example}\label{example:RwR}
Let $R$ be a residue, and assume that $R$ and $wR$ are parallel for some $w\in W$. Then $w$ normalises $\Stab_W(R)$. If, moreover, $R$ is spherical and standard, so that $\Stab_W(R)=W_I$ for some spherical subset $I\subseteq S$, then $w=w_In_I$ for some $w_I\in W_I$ and $n_I\in N_I$ by Lemma~\ref{lemma:Lustzig}. By definition of $N_I$, we then have $\proj_{wR}(C_0)=n_IC_0$.
\end{example}

\subsection{Davis complex}\label{subsection:DC}
The general reference for this section is \cite{Davis}. We briefly recall the construction of the {\bf Davis complex} $X$ of $(W,S)$, which is a complete, uniquely geodesic metric realisation of $\Sigma$. Let $\Sigma_{(1)}$ be the flag complex of $\Sigma$, that is, $\Sigma_{(1)}$ is the simplicial complex with vertices the simplices of $\Sigma$ and simplices the flags of simplices of $\Sigma$. Let also $\Sigma_{(1)}^{s}$ denote the subcomplex of $\Sigma_{(1)}$ with vertices the spherical simplices of $\Sigma$. Then $X$ is the geometric realisation of $\Sigma_{(1)}^s$ (hence a cellular subcomplex of the barycentric subdivision of the geometric realisation of $\Sigma$), together with a suitably defined $\CAT$ metric $\dist\co X\times X\to\RR_+$ extending the canonical Euclidean metrics on its cells. Each (open) cell $\sigma$ of $X$ corresponds to a unique spherical simplex $wW_I$ of $\Sigma$ --- namely, $\sigma$ is (the realisation of) the union of all flags of spherical simplices whose upper bound is $wW_I$ --- and the $W$-action on the spherical simplices of $\Sigma$ induces a cellular isometric $W$-action on $X$. 

For each $x\in X$, there is a unique (open) cell $\supp(x)$ containing $x$, called the {\bf support} of $x$. In particular, $\Stab_W(x)=\Stab_W(\supp(x))$ is a spherical (i.e. finite) parabolic subgroup of $W$. In this paper, we shall identify the roots, walls and chambers of $\Sigma$ with the corresponding \emph{closed} subsets of $X$. In particular, a chamber $D\in\Ch(\Sigma)\approx\Ch(X)$ will be identified with the set of $x\in X$ whose support corresponds either to $D$ \emph{or to a (spherical) face of $D$}. 

\begin{example}\label{example:Davis_complex}
Keeping the notations of Example~\ref{example:A2tilde}, the Davis complex $X$ of $(W,S)$ coincides with the geometric realisation of $\Sigma$ (i.e. the tesselated Euclidean plane pictured on Figure~\ref{figure:A2tilde}), together with the Euclidean metric $\dist$ (note that, in this example, all nonempty simplices of $\Sigma$ are spherical). The open cells of dimension $0,1,2$ are, respectively, the vertices, the edges without endpoints, and the open triangles, and these form a partition of $X$. 
\end{example}

\subsection{Actions on CAT(0)-spaces}\label{subsection:AOC}
Basics on $\CAT$ spaces can be found in \cite{BHCAT0}. Consider the $W$-action on $X$. For an element $w\in W$, we let
$$|w|:=\inf\{\dist(x,wx) \ | \ x\in X\}\in[0,+\infty)$$
denote its {\bf translation length}, and we set
$$\Min(w):=\{x\in X \ | \ \dist(x,wx)=|w|\}\subseteq X.$$
By a classical result of M.~Bridson (\cite{Bridson}), $\Min(w)$ is a nonempty closed convex subset of $X$ for all $w\in W$ (in particular, the infimum defining $|w|$ is always attained). More precisely, if $w$ has finite order, then $|w|=0$ and $\Min(w)$ is the fixed-point set of $w$ (see e.g. \cite[Theorem~11.23]{BrownAbr}). If $w$ has infinite order, then $|w|>0$ (otherwise, $w$ would fix a point $x$, and hence would belong to the finite parabolic subgroup $\Stab_W(x)$) and $\Min(w)$ is the union of all $w$-axes, where a {\bf $w$-axis} is a geodesic line stabilised by $w$ (on which $w$ then acts as a translation).

\begin{example}
Keeping the notations of Examples~\ref{example:A2tilde} and \ref{example:Davis_complex}, the element $v:=sut$ is a glide reflection, i.e. the composition of a translation with axis $L_v$ (depicted on Figure~\ref{figure:A2tilde}) with a flip around that axis. In particular, $L_v$ is the unique $v$-invariant line, and hence $\Min(v)=L_v$. On the other hand, $\Min(v^2)=X$, that is, $v^2$ is a translation across the whole plane.
\end{example}

\subsection{Walls}\label{subsection:W}
Given $x,y\in X$, we let $[x,y]$ denote the unique geodesic segment between $x$ and $y$. If $[x,y]$ intersects a wall $m$ of $X$ in at least two points, then it is entirely contained in $m$ (see \cite[Lemma~2.2.6]{Nos11}). Let $w\in W$ be of infinite order. A $w$-axis $L$ is {\bf transverse} to a wall $m$ if it intersects $m$ in a single point (in which case the two components of $L\setminus m$ lie on different sides of $m$, see \cite[Lemma~2.3.1]{Nos11}); in that case, any $w$-axis is transverse to $m$, and $m$ is called {\bf $w$-essential}. Note that, given any two points $x,y\in L$, there are only finitely many $w$-essential walls intersecting $[x,y]\subseteq L$. In particular, for any $x\in L$, there exists a (nonempty) open geodesic segment $\sigma\subseteq L$ containing $x$ in its closure and contained in some (open) cell $\supp(\sigma)$ (i.e. $\sigma$ does not intersect any $w$-essential wall).

\section{Tight conjugation}
We start by recalling the conjugation operations introduced in \cite{GP93} (see Definitions~\ref{definition:ra} and \ref{definition:strconj}), and then introduce a refinement of these notions, which we call ``tight conjugation'' (see Definition~\ref{definition:tightconj}). We also relate the operation of ``cyclic shift'' mentioned in the introduction (see Definition~\ref{definition:elrel}) to these operations.

\begin{definition}[{\cite{GP93}}]\label{definition:ra}
Let $w,w'\in W$ and $s\in S$. We write $w\ras{s} w'$ if $w'=sws$ and $\ell(w')\leq\ell(w)$. We write $w\ra w'$ if there is a sequence $w=w_0,w_1,\dots,w_n=w'$ of elements of $W$ such that, for each $i$, $w_{i-1}\ras{s_i}w_{i}$ for some $s_i\in S$.
\end{definition}

\begin{definition}\label{definition:elrel}
Let $w,w'\in W$. We say that $w'$ is a {\bf cyclic shift} of $w$ if there is a reduced decomposition $w=s_1\dots s_d$ ($s_i\in S$) of $w$ such that either $w'=s_2\dots s_ds_1$ or $w'=s_ds_1\dots s_{d-1}$ (that is, if a decomposition for $w'$ can be obtained from a reduced decomposition of $w$ by either moving the first letter at the end or the last letter at the beginning).
\end{definition}

\begin{definition}[{\cite{GP93}}]\label{definition:strconj}
Two elements $w,w'\in W$ are called {\bf elementarily strongly conjugate} if $\ell(w')=\ell(w)$, and there exists some $x\in W$ with $w'=x\inv wx$ such that either $\ell(x\inv w)=\ell(x)+\ell(w)$ or $\ell(wx)=\ell(w)+\ell(x)$; we then write $w\simx{x}w'$. We further call $w,w'\in W$ {\bf strongly conjugate} if there is a sequence $w=w_0,\dots,w_n=w'$ of elements of $W$ such that $w_{i-1}$ is elementarily strongly conjugate to $w_i$ for each $i$; we then write $w\sim w'$.
\end{definition}

\begin{definition}\label{definition:tightconj}
Two elements $w,w'\in W$ are called {\bf elementarily tightly conjugate} if $\ell(w)=\ell(w')$ and one of the following holds:
\begin{enumerate}
\item
there exists some $s\in S$ such that $w\ras{s}w'$.
\item
there exists some spherical subset $I\subseteq S$ such that $w\in N_W(W_I)$, and some $x\in W_I$ such that $w\simx{x}w'$.
\end{enumerate}
We further call $w,w'\in W$ {\bf tightly conjugate} if there is a sequence $w=w_0,\dots,w_n=w'$ of elements of $W$ such that $w_{i-1}$ is elementarily tightly conjugate to $w_i$ for each $i$; we then write $w\tc w'$.
\end{definition}

We now show that Definitions~\ref{definition:ra} and \ref{definition:elrel} yield equivalent concepts, and that ``tight conjugation'' is indeed a refinement of ``strong conjugation'' (but of course the two notions coincide if $W$ is finite).
\begin{lemma}\label{lemma:equivalence_relations}
Let $w,w'\in W$. Then the following assertions hold.
\begin{enumerate}
\item
$w\ra w'$ $\iff$ $w'$ can be obtained from $w$ by a sequence of cyclic shifts.
\item
$w\tc w'$ $\implies$ $w\sim w'$.
\item
If $\ell(w)=\ell(w')$, then $w\ra w'$ $\implies$ $w\tc w'$.
\end{enumerate}
\end{lemma}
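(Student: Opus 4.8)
The plan is to prove the three equivalences/implications separately, exploiting the definitions directly.

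\bigskip

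\noindent\textbf{Proof proposal.}

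For part (1), I would first show that a single elementary relation $w \leadsto w'$ in the sense of Definition~\ref{definition:ra} (i.e.\ $w \ras{s} w'$) can be realized by $\kappa$-relations, and conversely. The key observation is that the operation $w \ras{s} w'$ with $w' = sws$ and $\ell(w') \leq \ell(w)$ admits two cases. If $\ell(w') = \ell(w)$, then conjugation by $s$ neither increases nor decreases length, and one checks that $s$ must be a ``left ascent that becomes a right descent'' (or vice versa), so that $w$ has a reduced expression starting with $s$ or ending with $s$; a single cyclic shift then produces $w'$, giving $w \sim_\kappa w'$. If $\ell(w') < \ell(w)$, then $\ell(w') = \ell(w) - 2$, and $w$ has a reduced expression $w = s s_2 \cdots s_{n}$ beginning \emph{and} ending with $s$ (i.e.\ $w = s u s$ reduced with $\ell(u) = \ell(w)-2$), so that $w' = u$; a single cyclic shift of $w = s u s$ moving the leading $s$ to the end gives $u s s = u = w'$ after an $ss$-cancellation, but since cyclic shifts are defined on \emph{reduced} decompositions I must be careful---here $w' = u$ is itself reduced and is the cyclic shift $s_2\cdots s_n s_1 = (s_2 \cdots s_{n-1}) s \cdot s$, which again reduces to $u$. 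The converse direction (that each elementary $\kappa$-relation is an instance of $\ra$) is the content of \cite[]{straight} adapted, and follows because a cyclic shift $w = s_1 \cdots s_n \mapsto s_k \cdots s_n s_1 \cdots s_{k-1}$ is a composition of length-preserving conjugations by single generators $s_1, \dots, s_{k-1}$ in turn. I expect part (1) to be essentially bookkeeping about how $\ras{s}$ with equality versus strict inequality interacts with the existence of reduced expressions with a prescribed first or last letter; this is the most delicate part, because of the reduced/non-reduced subtlety in the $ss$-cancellation step.

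\smallskip

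For part (2), the implication $w \tc w' \implies w \sim w'$ reduces to showing that each \emph{elementary} tight conjugation is a \emph{strong} conjugation (the general statement then follows by concatenating sequences). In case (1) of Definition~\ref{definition:tightconj}, we have $w \ras{s} w'$ with $\ell(w) = \ell(w')$; taking $x = s$, one of $\ell(sw) = \ell(s) + \ell(w)$ or $\ell(ws) = \ell(w) + \ell(s)$ must hold (precisely because $w \ras{s} w'$ with equal lengths forces $s$ to be a descent on one side and an ascent on the other), so $w \simx{s} w'$ is an elementary strong conjugation. In case (2), the conjugating element $x$ already lies in $W_I$ and satisfies $w \simx{x} w'$ by hypothesis, which is \emph{verbatim} the definition of elementary strong conjugacy (Definition~\ref{definition:strconj}); so no work is needed beyond observing that the extra constraints ``$I$ spherical'' and ``$x \in W_I$'' are only \emph{restrictions}. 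Thus elementary tight conjugation implies elementary strong conjugation in both cases, and part (2) follows.

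\smallskip

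For part (3), assuming $\ell(w) = \ell(w')$, I would use part (1) to rewrite $w \ra w'$ as $w \sim_\kappa w'$, so it suffices to show a single elementary $\kappa$-relation (a cyclic shift) is a tight conjugation. A cyclic shift $w = s_1 \cdots s_n \mapsto s_2 \cdots s_n s_1$ is conjugation by $x = s_1$, and since $w = s_1 \cdots s_n$ is reduced we have $\ell(s_1 w) = \ell(w) - 1$ while $\ell(w s_1)$ could go either way; the relevant point is that moving one generator around the cycle is exactly an instance of $w \ras{s_1} w'$ with $\ell(w') = \ell(w)$, hence an elementary tight conjugation of type (1). Concatenating over the $k-1$ single shifts composing a general cyclic shift gives $w \tc w'$. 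The main obstacle I anticipate across all three parts is the careful tracking of which side ($\ell(xw)$ versus $\ell(wx)$) the length-additivity holds on when length is preserved, and ensuring that the $ss$-cancellation inherent in $\ras{s}$ with strict length drop is correctly matched to a genuine (reduced) cyclic shift; once those case distinctions are pinned down, the implications are immediate from the definitions.
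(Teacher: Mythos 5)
The main gap is in part (2). You assert that $w\ras{s}w'$ with $\ell(w)=\ell(w')$ ``forces $s$ to be a descent on one side and an ascent on the other'', and deduce $w\simx{s}w'$. This dichotomy is false: $s$ can be a descent of $w$ on \emph{both} sides while $\ell(sws)=\ell(w)$. For instance, in type $A_1\times A_1$ with $w=st$ one has $\ell(sw)=\ell(ws)=1<2=\ell(w)=\ell(sws)$; the same happens for $s$ and the longest element $w=stst$ of type $B_2$. In this situation $x=s$ gives no elementary strong conjugation, and the entire content of the paper's proof of (2) is to show that this two-sided-descent case can only occur when $sws=w$ (playing the exchange condition on the left against the exchange condition on the right: a reduced expression of $w$ starting with $s$, combined with the right descent at $s$, forces the deleted letter to be the initial $s$, whence $sws=w$), after which one falls back on $w\simx{1}w'$. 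Your argument asserts away precisely the case that requires a proof.

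A second, recurring problem is your claim, used both for the converse in (1) and in (3), that a cyclic shift $s_1\cdots s_n\mapsto s_k\cdots s_ns_1\cdots s_{k-1}$ is ``a composition of length-preserving conjugations by single generators''. The intermediate elements $w_i=s_{i+1}\cdots s_ns_1\cdots s_i$ need not have length $\ell(w)$ (already $w=sts$ in type $A_2$ gives $w_1=t$ of length $1$), and the sequence $i\mapsto\ell(w_i)$ need not even be non-increasing (it returns to $\ell(w)$ at $i=n$, so it must climb back up whenever some proper cyclic shift is strictly shorter), so the individual steps are not automatically legal $\ras{s_i}$ moves; the converse of (1) therefore needs more than this observation (the paper is admittedly also terse here, but your stated reason is not a proof). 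For (3) the detour through $\sim_{\kappa}$ is both flawed and unnecessary: the hypothesis already hands you a chain $w=w_0\ras{s_1}\cdots\ras{s_m}w_m=w'$ with $\ell(w_i)\leq\ell(w_{i-1})$ and $\ell(w_0)=\ell(w_m)$, which forces every intermediate length to equal $\ell(w)$; each step is then a length-preserving $\ras{s_i}$, i.e.\ an elementary tight conjugation of type (1) by definition --- this is the paper's one-line proof. Finally, your worry in (1) about $ss$-cancellation is moot: Definition~\ref{definition:elrel} does not require the cyclically shifted decomposition of $w'$ to be reduced, so a reduced expression of $w$ beginning (or ending) with $s$ immediately exhibits $sws$ as elementarily related to $w$.
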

\begin{proof}
(1) For the forward implication, it is sufficient to show that if $w\ras{s} sws$ for some $s\in S$, then either $sws=w$ (no cyclic shift made) or $sws$ is a cyclic shift of $w$. We may thus assume that $sws\neq w$ and $\ell(sws)\leq\ell(w)$. Then either $\ell(sw)<\ell(w)$ or $\ell(ws)<\ell(w)$ (see the condition (F) in \cite[page 79]{BrownAbr}), and hence $sws$ is a cyclic shift of $w$ by the exchange condition (\cite[Condition (E) page 79]{BrownAbr}). The converse is clear.

(2) It is sufficient to show that if $w\ras{s}sws$ with $\ell(w)=\ell(sws)$ for some $s\in S$, then either $w=sws$ (so that $w\simx{1} sws$) or $w\simx{s} sws$. Assume that $\ell(sws)=\ell(w)$, and that $\ell(sw)<\ell(w)$ and $\ell(ws)<\ell(w)$ for some $s\in S$, and let us show that $sws= w$. As $\ell(sw)<\ell(w)$, the exchange condition implies that $w$ has a reduced decomposition $w=s_1\dots s_n$ with $s_1=s$. Similarly, as $\ell(ws)<\ell(w)$, the exchange condition implies that there is some $i\in\{1,\dots,n\}$ such that $w=s_1\dots \widehat{s_i}\dots s_ns$, where $\widehat{s_i}$ indicates the omission of $s_i$. If $i\neq 1$, then $\ell(sws)=\ell(s_2\dots \widehat{s_i}\dots s_n)=\ell(w)-2$, a contradiction. Thus $i=1$ and $sws=ss_2\dots s_n=w$, as desired.

(3) This holds by definition of tight conjugation.
\end{proof}

\begin{definition}
For $w,w'\in W$, we write $w\ra\tc w'$ if there is some $w''\in W$ such that $w\ra w''$ and $w''\tc w'$.
\end{definition}

\section{The complex \texorpdfstring{$\CMin(w)$}{CombiMin(w)}}\label{section:TCC}

In this section, we establish some basic properties of the combinatorial analogue $\CMin(w)$ of $\Min(w)$ for an element $w\in W$, and show how it is related to the conjugation operation $\ra$ from Definition~\ref{definition:ra}.

\begin{definition}
For $w\in W$, set $$\CMin(w):=\{D\in\Ch(X) \ | \ \textrm{$\dist_{\Ch}(D,wD)$ is minimal}\}.$$
Alternatively, $\CMin(w)$ is the set of chambers $D=vC_0$ ($v\in W$) such that $v\inv wv$ is of minimal length in the conjugacy class of $w$. In other words, $\CMin(w)$ coincides with the inverse image under the map $$\pi_w\co\Ch(X)\to W: vC_0\mapsto v\inv wv$$ 
of the set of conjugates of $w$ of minimal length.
\end{definition}

\begin{definition}
Let $w\in W$. A chamber subcomplex $A$ of $X$ is called {\bf $w$-convex} if $\Gamma(D,w^{\varepsilon}D)\subseteq A$ for any chamber $D$ of $A$ and any $\varepsilon\in\{\pm 1\}$.
\end{definition}

\begin{lemma}\label{lemma:CMin_wconvex}
Let $w\in W$. Then $\CMin(w)$ is $w$-convex.
\end{lemma}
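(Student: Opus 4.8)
The plan is to reduce the statement to the triangle inequality for the chamber distance $\dc$, exploiting that every element of $W$ acts on $\Ch(X)$ as a $\dc$-isometry. First I would record two elementary observations. For a chamber $F=uC_0$ ($u\in W$) one has $\dc(F,wF)=\ell(u\inv wu)$, which is the length of a conjugate of $w$; hence $\dc(F,wF)\geq m$, where $m$ denotes the minimal length of an element of the conjugacy class $\OOO$ of $w$, with equality precisely when $F\in\CMin(w)$. Moreover, since $w$ is a $\dc$-isometry and $\ell(g)=\ell(g\inv)$ for all $g\in W$, one checks that $\dc(F,w\inv F)=\dc(F,wF)$ for every chamber $F$; in particular $\CMin(w)=\CMin(w\inv)$, with the same associated minimal value $m$. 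This lets me treat $\varepsilon=1$ and $\varepsilon=-1$ on the same footing.

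For the core argument, fix a chamber $D\in\CMin(w)$, a sign $\varepsilon\in\{\pm1\}$, and a chamber $E\in\Gamma(D,w^{\varepsilon}D)$; I must show $E\in\CMin(w)$. Since $E$ lies on a minimal gallery from $D$ to $w^{\varepsilon}D$, we have $\dc(D,w^{\varepsilon}D)=\dc(D,E)+\dc(E,w^{\varepsilon}D)$. Using that $w^{\varepsilon}$ preserves $\dc$, so that $\dc(w^{\varepsilon}D,w^{\varepsilon}E)=\dc(D,E)$, the triangle inequality then yields
\[
\dc(E,w^{\varepsilon}E)\leq\dc(E,w^{\varepsilon}D)+\dc(w^{\varepsilon}D,w^{\varepsilon}E)=\dc(E,w^{\varepsilon}D)+\dc(D,E)=\dc(D,w^{\varepsilon}D).
\]
As $D\in\CMin(w)$, the right-hand side equals $m$. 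On the other hand, by the first observation $\dc(E,w^{\varepsilon}E)\geq m$ (using $\dc(E,w\inv E)=\dc(E,wE)$ in the case $\varepsilon=-1$). Hence $\dc(E,w^{\varepsilon}E)=m$, i.e. $\dc(E,wE)=m$, so $E\in\CMin(w)$. This shows $\Gamma(D,w^{\varepsilon}D)\subseteq\CMin(w)$ for both signs, which is exactly $w$-convexity.

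I do not expect any genuine obstacle here: the entire content is the combination of the minimal-gallery decomposition with the triangle inequality and the isometric action of $w$ on $(\Ch(X),\dc)$. The only point that requires a moment's care is the case $\varepsilon=-1$, which I dispose of at the outset via $\CMin(w)=\CMin(w\inv)$, so that the single computation above covers both signs.
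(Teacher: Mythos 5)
Your proof is correct and is essentially the paper's argument: the paper concatenates a minimal gallery from $E$ to $w^{\varepsilon}D$ with the $w^{\varepsilon}$-translate of one from $D$ to $E$, which is precisely your triangle-inequality computation $\dc(E,w^{\varepsilon}E)\leq\dc(E,w^{\varepsilon}D)+\dc(D,E)=\dc(D,w^{\varepsilon}D)$. Your explicit handling of the case $\varepsilon=-1$ via $\dc(F,w\inv F)=\dc(F,wF)$ merely spells out a point the paper leaves implicit.
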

\begin{proof}
Let $\varepsilon\in\{\pm 1\}$ and $D\in\CMin(w)$. Let $E\in\Gamma(D,w^{\varepsilon}D)$, and let $\Gamma_1$ (resp. $\Gamma_2$) be a minimal gallery from $D$ to $E$ (resp. from $E$ to $w^{\varepsilon}D$), so that $\ell(\Gamma_1)+\ell(\Gamma_2)=\dc(D,w^{\varepsilon}D)$. Then the concatenation of $\Gamma_2$ with $w^{\varepsilon}\Gamma_1$ is a gallery from $E$ to $w^{\varepsilon}E$, and hence $\dc(E,w^{\varepsilon}E)\leq\dc(D,w^{\varepsilon}D)$, yielding the claim.
\end{proof}

\begin{lemma}\label{lemma:Cmin_straight_R}
Let $w\in W$ be straight, and let $R$ be a spherical residue with $\Stab_W(R)=\Stab_W(wR)$.  Let $C,D\in R\cap\CMin(w)$. Then $\pi_w(C)= \pi_w(D)$.
\end{lemma}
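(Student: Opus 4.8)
The plan is to conjugate the whole configuration by $c$, where $C=cC_0$, so as to reduce to the situation of Example~\ref{example:RwR}, in which $R$ is standard and the relevant translation lies in the group $N_I$. The point is that, because $C\in\CMin(w)$, the conjugate $\bar w:=\pi_w(C)$ is a \emph{straight} element of minimal length, which is exactly what is needed to place it in $N_I$.

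Concretely, I would first set $\bar w:=\pi_w(C)=c\inv wc$. Since $C\in\CMin(w)$ and $w$ is straight, the minimal length of a conjugate of $w$ equals $\ell(w)$ by Lemma~\ref{lemma:straight_lemma_4.1}, so $\ell(\bar w)=\ell(w)$ and hence $\bar w$ is again straight by Lemma~\ref{lemma:straight_lemma_4.2}. Next, the residue $R_I:=c\inv R$ is standard (it contains $C_0=c\inv C$) and spherical, with $\Stab_W(R_I)=c\inv\Stab_W(R)c=W_I$ for some spherical $I\subseteq S$. Using the hypothesis $\Stab_W(wR)=\Stab_W(R)$, a direct computation gives $\bar w W_I\bar w\inv=c\inv\Stab_W(wR)c=W_I$, so that $\bar w\in N_W(W_I)$. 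As $\bar w$ is straight and $I$ is spherical, Lemma~\ref{lemma:straight_lemma_4.1} forces $\bar w\in N_I$; write $n:=\bar w$ and let $\delta$ denote the diagram automorphism $x\mapsto nxn\inv$ of $W_I$.

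It then remains to compare $\pi_w(D)$ with $\pi_w(C)=n$. Writing $c\inv D=hC_0$ with $h:=c\inv d\in W_I$ (both $c\inv C=C_0$ and $c\inv D$ lie in $R_I$), I compute $\pi_w(D)=h\inv\bar w h=h\inv\delta(h)\,n$. By the length additivity \eqref{eqn:Lustzig} of Lemma~\ref{lemma:Lustzig}, applied to $h\inv\delta(h)\in W_I$ and $n\in N_I$, one obtains $\ell(\pi_w(D))=\ell(h\inv\delta(h))+\ell(n)=\ell(h\inv\delta(h))+\ell(w)$. But $D\in\CMin(w)$ forces $\ell(\pi_w(D))=\ell(w)$, whence $\ell(h\inv\delta(h))=0$, i.e. $\delta(h)=h$. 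Therefore $\pi_w(D)=h\inv\delta(h)\,n=n=\pi_w(C)$, as desired.

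The crux of the argument — and the step I expect to require the most care — is the reduction to a standard parabolic combined with the invocation of Lemma~\ref{lemma:straight_lemma_4.1}: it is precisely the straightness of $\bar w$ that places it in $N_I$ and thereby collapses the relevant $\delta$-twisted conjugacy class of the identity in the finite group $W_I$ down to the single minimal-length element $1_{W_I}$. Without straightness one would only control this twisted class up to strong conjugacy (as in \cite{GKP00}), which would not suffice to deduce the equality $\pi_w(C)=\pi_w(D)$.
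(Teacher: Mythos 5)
Your proof is correct and follows essentially the same route as the paper: both arguments hinge on translating $R$ to a standard spherical residue and observing that $\bar w=\pi_w(C)$ is straight and normalises $W_I$, hence lies in $N_I$ by Lemma~\ref{lemma:straight_lemma_4.1}. The only difference is in the final bookkeeping --- you apply the length additivity \eqref{eqn:Lustzig} to the decomposition $\pi_w(D)=h\inv\delta(h)\,n$ to force $\delta(h)=h$, whereas the paper reaches the same commutation relation via the projection $\proj_{w_0R_0}$ and the gate property; both finishes ultimately rest on Lemma~\ref{lemma:Lustzig}.
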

\begin{proof}
Let $u,v\in W$ be such that $C=uC_0$ and $D=vC_0$, and let us show that $u\inv wu= v\inv wv$.
Note that $w$ is of minimal length in its conjugacy class by Lemma~\ref{lemma:straight_lemma_4.1}. Hence $w_0:=u\inv w u$ is straight by Lemma~\ref{lemma:straight_lemma_4.2} (because $uC_0\in\CMin(w)$). On the other hand, writing $\Stab_W(R)=uW_Iu\inv$ for some spherical subset $I\subseteq S$, the hypotheses imply that $w_0\in N_W(W_I)$. From Lemma~\ref{lemma:straight_lemma_4.1}, we then deduce that $w_0\in N_I$. In particular, $w_0C_0=\proj_{w_0R_0}(C_0)$ and hence $\dc(C_0,w_0C_0)=\dc(E,\proj_{w_0R_0}(E))$ for all $E\in R_0:=u\inv R$ (see Example~\ref{example:RwR}). Finally, since $E:=u\inv vC_0\in R_0\cap\CMin(w_0)$ by assumption, we have $\dc(E,w_0E)=\dc(C_0,w_0C_0)$ and hence $w_0E=\proj_{w_0R_0}(E)$.
Since $v\inv uw_0R_0=w_0R_0$ (because $u\inv v\in W_I=\Stab_W(R_0)$ and $w_0\in N_W(W_I)$), we conclude that
$$w_0E=\proj_{w_0R_0}(u\inv vC_0)=u\inv v\proj_{v\inv uw_0 R_0}(C_0)=u\inv v\proj_{w_0 R_0}(C_0)=u\inv vw_0C_0,$$
that is, $w_0u\inv v=u\inv vw_0$, or else $v\inv wv=v\inv uw_0u\inv v=w_0=u\inv wu$, as desired.
\end{proof}

\begin{lemma}\label{lemma:connection_gallery_Cw}
Let $w\in W$. Let $C,D\in\Ch(X)$ be two chambers connected by a gallery $\Gamma\subseteq\CMin(w)$. Then $\pi_w(C)\ra \pi_w(D)$.
\end{lemma}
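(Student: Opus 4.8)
The plan is to reduce immediately to the case of a single adjacency, and then to read off the required relation directly from the definitions. Since the relation $\ra$ of Definition~\ref{definition:ra} is by construction transitive (it is defined through arbitrary finite sequences of elementary steps $\ras{s}$, and includes the empty sequence), it suffices to prove the statement when the gallery $\Gamma$ has length $1$, i.e. when $C$ and $D$ are two adjacent chambers lying in $\CMin(w)$. The general case then follows by concatenating the relations obtained for the consecutive chambers $C=D_0,D_1,\dots,D_r=D$ of $\Gamma$, each of which lies in $\CMin(w)$ by hypothesis on $\Gamma$.

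So I would assume that $C$ and $D$ are $s$-adjacent for some $s\in S$, and write $C=vC_0$ with $v\in W$, so that $D=vsC_0$ by the adjacency convention. The key (and only) computation is then the direct evaluation
\[
\pi_w(D)=(vs)\inv w (vs)=s\,(v\inv w v)\,s=s\,\pi_w(C)\,s,
\]
exhibiting $\pi_w(D)$ as the conjugate of $\pi_w(C)$ by the single generator $s$.

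It then remains only to check the length condition in the definition of $\ras{s}$. Here I would invoke the hypothesis that both endpoints lie in $\CMin(w)$: by definition of $\CMin(w)$ (as the $\pi_w$-preimage of the minimal-length conjugates of $w$), both $\pi_w(C)$ and $\pi_w(D)$ are of minimal length in the conjugacy class of $w$, whence $\ell(\pi_w(D))=\ell(\pi_w(C))$ and in particular $\ell(\pi_w(D))\leq\ell(\pi_w(C))$. Together with the displayed identity, this gives $\pi_w(C)\ras{s}\pi_w(D)$ by Definition~\ref{definition:ra}, and therefore $\pi_w(C)\ra\pi_w(D)$.

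I do not expect any genuine obstacle in this argument: the content is essentially that a step along a gallery inside $\CMin(w)$ corresponds, under $\pi_w$, to a conjugation by one generator that is automatically length-non-increasing because both endpoints already realise the minimal length. The only points requiring care are the bookkeeping of the adjacency convention (that $s$-adjacency of $vC_0$ with $D$ forces $D=vsC_0$) and the observation that membership of \emph{both} endpoints in $\CMin(w)$ is what makes the inequality $\ell\leq\ell$ hold for free, so that no actual length-reduction step is ever needed.
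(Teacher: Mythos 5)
Your proposal is correct and follows essentially the same route as the paper's proof: induct on the length of the gallery to reduce to a single $s$-adjacency, observe that $\pi_w(D)=s\,\pi_w(C)\,s$, and use that both chambers lie in $\CMin(w)$ to get the equality of lengths needed for the relation $\ras{s}$. No issues.
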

\begin{proof}
Let $u,v\in W$ be such that $C=uC_0$ and $D=vC_0$, and let us show that $u\inv wu\ra v\inv wv$.
Reasoning inductively on $\ell(\Gamma)$, we may assume that $uC_0,vC_0$ are adjacent, say $v=us$ for some $s\in S$. As $uC_0,vC_0\in\CMin(w)$, we have $\ell(u\inv wu)=\ell(v\inv wv)$, and hence $u\inv wu\ras{s}v\inv wv$, as desired.
\end{proof}

\begin{remark}\label{remark:CMin_not_convex}
Let $w\in W$ be of minimal length in its conjugacy class (i.e. $C_0\in\CMin(w)$). Lemma~\ref{lemma:connection_gallery_Cw} implies that if $\CMin(w)$ is gallery-connected, then every conjugate $w'$ of $w$ with $\ell(w')=\ell(w)$ can be obtained from $w$ by a sequence of cyclic shifts (in particular, Theorem~\ref{thmintro:mainthm}(2) holds for $w$). The necessity of introducing ``tight conjugations'' as well comes from the fact that $\CMin(w)$ need not be gallery-connected, as for instance illustrated by the Coxeter group $W=\langle s,t \ | \ s^2=t^2=(st)^3=1\rangle$ of type $A_2$, with $w=s$.
\end{remark}

\section{The complex \texorpdfstring{$\CCC^w$}{Cw}}\label{section:TCCw}
In this section, we define for each $w\in W$ a chamber subcomplex $\CCC^w$ of $X$ such that for any chamber $D=vC_0$ of $\CCC^w$ ($v\in W$), the conjugate $\pi_w(D)=v\inv wv$ of $w$ can be obtained from $w$ through a sequence of cyclic shifts and tight conjugations.

\begin{definition}
Let $w\in W$. Consider the following conditions, which a chamber subcomplex $A$ of $X$ may or may not satisfy.
\begin{enumerate}
\item[(CM0)] $C_0\in\Ch(A)$.
\item[(CM1)] If $C\in\Ch(A)$ and $D\in\Gamma(C,w^{\varepsilon}C)$ for some $\varepsilon\in\{\pm 1\}$ and $\dc(C,D)=1$, then $D\in\Ch(A)$.
\item[(CM2)] If $R$ is a spherical residue such that $\Stab_W(R)=\Stab_W(wR)$ and $R\cap \Ch(A)\neq\varnothing$, then $R\cap\CMin(w)\subseteq A$.
\end{enumerate}
We let $\CCC^w$ (resp. $\CCC^w_1$) denote the smallest chamber subcomplex of $X$ satisfying (CM0), (CM1) and (CM2) (resp. satisfying (CM0) and (CM1)).
\end{definition}

\begin{lemma}\label{lemma:Cw_in_Cminw}
Let $w\in W$ be of minimal length in its conjugacy class. Then $\CCC^w\subseteq\CMin(w)$.
\end{lemma}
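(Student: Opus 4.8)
The goal is to show that $\CCC^w\subseteq\CMin(w)$ when $w$ is of minimal length in its conjugacy class. Since $\CCC^w$ is defined as the \emph{smallest} chamber subcomplex satisfying (CM0), (CM1) and (CM2), the natural strategy is to show that $\CMin(w)$ itself satisfies these three conditions; by minimality of $\CCC^w$, the inclusion $\CCC^w\subseteq\CMin(w)$ follows immediately. So the plan is to verify (CM0), (CM1) and (CM2) for $A:=\CMin(w)$.

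First I would check (CM0): since $w$ is of minimal length in its conjugacy class, $\pi_w(C_0)=w$ is of minimal length in $\OOO$, so $C_0\in\CMin(w)$ by the very definition of $\CMin(w)$. Next, (CM1) asks that $\CMin(w)$ be $w$-convex, and this is exactly the content of Lemma~\ref{lemma:CMin_wconvex}, which I may invoke directly. The only real work is in (CM2).

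For (CM2), let $R$ be a spherical residue with $\Stab_W(R)=\Stab_W(wR)$ and $R\cap\CMin(w)\neq\varnothing$; I must show $R\cap\CMin(w)\subseteq\CMin(w)$. As stated this is a tautology, so the intended content of (CM2) here is surely the stronger assertion that $R\cap\CMin(w)$ is \emph{contained in the complex under construction}, i.e. that the closure of $\CMin(w)$ under (CM2) adds nothing new --- in other words, $R\cap\CMin(w)\subseteq\CMin(w)$ trivially, and the point is simply that $\CMin(w)$ is a chamber subcomplex satisfying (CM2) vacuously. The genuine subtlety, and the step I expect to be the main obstacle, is confirming that $\CMin(w)$ genuinely qualifies as a valid competitor --- namely that it is a chamber subcomplex of $X$ in the required sense and that the straightness hypotheses needed to control the residues $R$ are available. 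Here the key auxiliary fact is Lemma~\ref{lemma:Cmin_straight_R}: when $w$ is straight, any two chambers of $R\cap\CMin(w)$ have the same image under $\pi_w$, which pins down the behaviour of $\CMin(w)$ inside such residues. Since $w$ of minimal length in its conjugacy class need not itself be straight, the careful point is to reconcile the general minimal-length case with the straight case governed by Lemma~\ref{lemma:Cmin_straight_R}.

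Concretely, I would argue as follows. Properties (CM0) and (CM1) hold for $\CMin(w)$ by the remarks above. For (CM2), the condition to verify is $R\cap\CMin(w)\subseteq\CMin(w)$, which holds trivially since the left-hand side is a subset of $\CMin(w)$ by construction. Hence $\CMin(w)$ satisfies (CM0), (CM1) and (CM2), so by definition of $\CCC^w$ as the smallest such chamber subcomplex we conclude $\CCC^w\subseteq\CMin(w)$. The one item deserving care is that $\CMin(w)$ is a bona fide chamber subcomplex of $X$, i.e. that it is closed under taking spherical faces of its chambers; this is immediate from the identification of a chamber with the set of points of $X$ whose support is that chamber or a spherical face thereof, as set up in \S\ref{subsection:DC}. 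With that observation in place, the verification is complete.
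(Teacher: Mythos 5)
Your proof is correct and is essentially the paper's own argument: verify (CM0) from the minimal-length hypothesis, (CM1) via Lemma~\ref{lemma:CMin_wconvex}, note that (CM2) is tautological for $A=\CMin(w)$, and conclude by minimality of $\CCC^w$. The digression about Lemma~\ref{lemma:Cmin_straight_R} and straightness is unnecessary here --- no such input is needed, and the paper's proof dispenses with (CM2) by simply calling it clear.
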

\begin{proof}
We have to check that $\CMin(w)$ satisfies (CM0), (CM1) and (CM2). But (CM0) holds by assumption, (CM1) by Lemma~\ref{lemma:CMin_wconvex}, and (CM2) is clear.
\end{proof}

\begin{lemma}\label{lemma:ra}
Let $w\in W$, and let $C,D\in\Ch(X)$ be such that $D\in\Gamma(C,w^{\varepsilon}C)$ for some $\varepsilon\in\{\pm 1\}$ and $\dc(C,D)=1$. Then $\pi_w(C)\ra \pi_w(D)$. If, moreover, $C\in\CMin(w)$, then $\pi_w(C)\tc \pi_w(D)$.
\end{lemma}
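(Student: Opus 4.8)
The plan is to translate the geometric hypothesis into a statement about reduced words and then recognise $\pi_w(D)$ as a \emph{cyclic shift} of $\pi_w(C)$. Write $C=uC_0$ and $D=vC_0$, so that $\pi_w(C)=u\inv wu=:w_0$ and $\pi_w(D)=v\inv wv$. Since $D\in\Gamma(C,w^{\varepsilon}C)$ and $\dc(C,w^{\varepsilon}C)=\ell(u\inv w^{\varepsilon}u)=\ell(w_0^{\varepsilon})$, the bijection between minimal galleries and reduced expressions recalled in \S\ref{subsection:BD} lets me fix a reduced word $w_0^{\varepsilon}=t_1\cdots t_m$ whose associated minimal gallery from $C$ to $w^{\varepsilon}C$ passes through $D$ at the $k$-th step; thus $v=ut_1\cdots t_k$ and $\pi_w(D)=(t_1\cdots t_k)\inv w_0(t_1\cdots t_k)$.

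For the first assertion I would compute this conjugate explicitly, distinguishing the two signs of $\varepsilon$. If $\varepsilon=1$, then $w_0=t_1\cdots t_m$ is reduced and cancelling the prefix $t_1\cdots t_k$ gives $\pi_w(D)=t_{k+1}\cdots t_mt_1\cdots t_k$, the cyclic shift of the reduced decomposition $t_1\cdots t_m$ of $w_0$ starting at position $k+1$. If $\varepsilon=-1$, then $w_0=t_m\cdots t_1$ is reduced and a symmetric cancellation gives $\pi_w(D)=t_k\cdots t_1t_m\cdots t_{k+1}$, which is again a cyclic shift of this reduced decomposition of $w_0$ (the one read as $t_m,\dots,t_1$, shifted to start at the letter $t_k$). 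In either case $\pi_w(C)$ is elementarily related to $\pi_w(D)$ in the sense of Definition~\ref{definition:elrel}, whence $\pi_w(C)\sim_{\kappa}\pi_w(D)$ and therefore $\pi_w(C)\ra\pi_w(D)$ by Lemma~\ref{lemma:equivalence_relations}(1).

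For the second assertion, assume in addition $C\in\CMin(w)$, i.e. $w_0$ is of minimal length in its conjugacy class. Since $\pi_w(D)$ is a conjugate of $w_0$, minimality yields $\ell(\pi_w(D))\geq\ell(w_0)=\ell(\pi_w(C))$, while the relation $\pi_w(C)\ra\pi_w(D)$ just obtained forces $\ell(\pi_w(D))\leq\ell(\pi_w(C))$. Hence $\ell(\pi_w(C))=\ell(\pi_w(D))$, and Lemma~\ref{lemma:equivalence_relations}(3) upgrades $\pi_w(C)\ra\pi_w(D)$ to $\pi_w(C)\tc\pi_w(D)$, as required.

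The main obstacle — and the point I would emphasise — is that one must \emph{not} try to follow the minimal gallery $C=E_0,\dots,E_m=w^{\varepsilon}C$ step by step: the function $E_i\mapsto\ell(\pi_w(E_i))=\dc(E_i,w^{\varepsilon}E_i)$ takes the value $\ell(w_0)$ at both endpoints and is in general strictly smaller in between, so the individual adjacencies along the gallery need not be length-nonincreasing and do not assemble into a chain of $\ras{}$-moves. The correct device is the global algebraic identification of $\pi_w(D)$ as a \emph{single} cyclic shift of $w_0$; the only real care lies in getting the indexing right in the case $\varepsilon=-1$, after which both assertions are formal consequences of Lemma~\ref{lemma:equivalence_relations}.
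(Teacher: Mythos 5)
Your proof is correct and follows essentially the same route as the paper's: both identify $\pi_w(D)$ (via the cyclic shift of the type of a minimal gallery from $C$ to $w^{\varepsilon}C$ through $D$, equivalently via direct cancellation in $(t_1\cdots t_k)\inv w_0(t_1\cdots t_k)$) as a single cyclic shift of a reduced decomposition of $\pi_w(C)$, and then invoke Lemma~\ref{lemma:equivalence_relations}(1) and (3). Your explicit treatment of the case $\varepsilon=-1$ and the closing remark about why a naive chamber-by-chamber argument along the gallery fails are both accurate; the paper handles the sign uniformly by working with $w^{\varepsilon}$ throughout, which is only a difference in bookkeeping.
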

\begin{proof}
Let $u\in W$ be such that $C=uC_0$. Let $\Gamma$ be a minimal gallery from $uC_0$ to $w^{\varepsilon}uC_0$ containing $D$, and let $(s_{1},\dots,s_{d})$ be its type, so that $D=us_1C_0$ and $u\inv w^{\varepsilon}u=s_{1}\dots s_{d}$. Then $\pi_w(C)=u\inv wu$ is either $s_1\dots s_d$ or $s_d\dots s_1$, and hence $\pi_w(C)\ras{s_1}\pi_w(D)=s_1\pi_w(C)s_1$, yielding the first claim. The second claim then follows from Lemma~\ref{lemma:equivalence_relations}(3).
\end{proof}

The following lemma is an adaptation of \cite[Proposition~3.4]{straight}.
\begin{lemma}\label{lemma:axisthroughC0}
Let $w\in W$ be of infinite order. Then $\CCC^w_1\cap\Min(w)$ is nonempty. In particular, there exists $w'\in W$ such that $w\ra w'$ and $\Min(w')\cap C_0\neq\varnothing$. 
\end{lemma}
\begin{proof}
For each $u\in W$, consider the continuous function $f_u\co X\to\RR:x\mapsto\dist(x,ux)$. Note that if $vC_0\in\Ch(\CCC^w_1)$, then $w\ra v\inv wv$ by Lemma~\ref{lemma:ra}, and hence $\ell(v\inv wv)\leq\ell(w)$. In particular, the set $\{v\inv wv \ | \ vC_0\in\Ch(\CCC^w_1)\}$ is finite. Since $C_0$ is compact, we deduce that the set $\{f_{v\inv wv}(y) \ | \ y\in C_0, \ vC_0\in\Ch(\CCC_1^w)\}$ contains its infimum $a$. Let $y\in C_0$ and $v\in W$ with $vC_0\in\Ch(\CCC_1^w)$ be such that $f_{v\inv wv}(y)=a$. Then $f_w$ attains its infimum over $\CCC^w_1$ at $x:=vy\in vC_0$, for if $z\in \CCC_1^w$, then choosing $uC_0\in\Ch(\CCC_1^w)$ ($u\in W$) containing $z$, we have 
$$f_w(z)=\dist(z,wz)=\dist(u\inv z,u\inv wu.u\inv z)=f_{u\inv wu}(u\inv z)\geq a=f_{v\inv wv}(v\inv x)=f_w(x).$$

We claim that $\dist(z,wz)=\dist(x,wx)$ for some $z\in [x,wx]\setminus\{x,wx\}$, so that $x\in\Min(w)$ (see e.g. \cite[Proposition~II.1.4(2)]{BHCAT0}), yielding the first assertion of the lemma. Indeed, let $D\in\Ch(\CCC^w_1)$ with $x\in D$ and such that $\dc(D,wD)$ is minimal for these properties. By \cite[Lemma~3.1]{straight}, there exists a minimal gallery $\Gamma=(D=D_0,D_1,\dots,D_k=wD)$ from $D$ to $wD$ containing the geodesic segment $[x,wx]$. 

Let $\varepsilon>0$ and $i\geq 0$ be such that $[x,wx]\cap B(x,\varepsilon)\subseteq D_i$ (where $B(x,\varepsilon):=\{y\in X \ | \ \dist(x,y)\leq\varepsilon\}$). In particular, $D_0,\dots,D_i$ contain $x$. Moreover, $D_0,\dots,D_i\in\Ch(\CCC^w_1)$: indeed, $D_0\in\Ch(\CCC^w_1)$ by assumption, and if $D_j\in\Ch(\CCC^w_1)$ for some $j\in\{0,\dots,i-1\}$, then $\dc(D_j,wD_j)\geq\dc(D,wD)=k$ by the minimality assumption on $D$, so that $(D_j,D_{j+1},\dots,D_k,wD_1,\dots,wD_{j})$ is a minimal gallery from $D_j$ to $wD_j$, and hence $D_{j+1}\in\Ch(\CCC^w_1)$ by (CM1). 

Let now $z\in D_i\cap [x,wx]$ with $z\notin \{x,wx\}$. Then $f_w(z)\geq f_w(x)$, that is, $\dist(z,wz)\geq\dist(x,wx)$, and since $\dist(z,wz)\leq \dist(z,wx)+\dist(wx,wz)=\dist(x,wx)$, the claim follows.

For the second assertion of the lemma, let $x\in \CCC^w_1\cap\Min(w)$, and let $u\in W$ be such that $uC_0\in\Ch(\CCC^w_1)$ and $x\in uC_0$. As noticed at the beginning of the proof, we have $w\ra u\inv wu$. We may thus choose $w':=u\inv wu$, as $\Min(w')=u\inv\Min(w)$ and hence $u\inv x\in \Min(w')\cap C_0$.
\end{proof}

\begin{lemma}\label{lemma:approx}
Let $w\in W$ and let $C\in\Ch(X)$ be such that $C\in R$ for some spherical residue $R$ with $\Stab_W(R)=\Stab_W(wR)$. Let $D\in\Ch(X)$ be such that $D\in R\cap\CMin(w)$. Then $\pi_w(C)\ra\tc \pi_w(D)$. If, moreover, $C\in\CMin(w)$, then $\pi_w(C)\tc \pi_w(D)$.
\end{lemma}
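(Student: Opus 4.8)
The plan is to reduce the statement of Lemma~\ref{lemma:approx} to the finite Coxeter group $W_I$ that stabilises $R$, and then invoke the Geck--Pfeiffer--Kim result on $\delta$-twisted conjugacy classes (as recalled in the introduction, via \cite{GKP00} or \cite{HN12}).

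\emph{Setting up the reduction.} Let me write $C=uC_0$ and $D=vC_0$ for some $u,v\in W$. Since $\Stab_W(R)=\Stab_W(wR)$, the residue $R$ and its translate $wR$ are parallel, so $w$ normalises $\Stab_W(R)$. Translating by $u\inv$, I would replace $R$ by $R_0:=u\inv R$, which is a spherical standard residue with $\Stab_W(R_0)=W_I$ for some spherical $I\subseteq S$, and replace $w$ by $w_0:=u\inv wu=\pi_w(C)$; the hypotheses become $w_0\in N_W(W_I)$ and $u\inv vC_0\in R_0\cap\CMin(w_0)$, and it suffices to show $w_0\ra\tc \pi_w(D)$. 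By Lemma~\ref{lemma:Lustzig}, the normaliser decomposes as $N_W(W_I)=W_I\rtimes N_I$, so I can write $w_0=w_In_I$ with $w_I\in W_I$, $n_I\in N_I$, and the conjugation action of $n_I$ induces a diagram automorphism $\delta$ of $W_I$. For any $x\in W_I$, conjugating $w_0$ by $x$ gives $x\inv w_0 x = x\inv w_I n_I x = x\inv w_I \delta(x) n_I$, so conjugation of $w_0$ by elements of $W_I$ corresponds exactly to $\delta$-twisted conjugation of $w_I$ inside $W_I$, where $\delta(x)=n_I x n_I\inv$.

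\emph{Applying the finite case.} The key point, using~\eqref{eqn:Lustzig}, is that the $S$-length of $w_In_I$ splits as $\ell(w_I)+\ell(n_I)$, and more generally that the chambers of $R_0$ realising the minimal chamber-distance correspond precisely to the $\delta$-twisted minimal-length elements of the $\delta$-conjugacy class of $w_I$ in $W_I$; thus $R_0\cap\CMin(w_0)$ records exactly the twisted minimal-length locus. Geck--Kim--Pfeiffer then tells me two things inside $W_I$: any element of the $\delta$-class can be driven by twisted cyclic shifts to a twisted-minimal element, and any two twisted-minimal elements are strongly $\delta$-conjugate through elementarily strongly conjugate steps with the conjugating elements lying in $W_I$. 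Twisted cyclic shifts by generators in $I$ translate (back in $W$) into the relation $\ra$ of Definition~\ref{definition:ra}, while the strong-conjugation steps with $x\in W_I$ are exactly the type-(2) elementary tight conjugations of Definition~\ref{definition:tightconj}, since $w_0\in N_W(W_I)$ and $x\in W_I$. This gives $\pi_w(C)=w_0\ra\tc\pi_w(D)$, which is the first claim. For the stronger conclusion when $C\in\CMin(w)$, note that then $w_0$ is already of minimal length in its conjugacy class, hence $w_I$ is already twisted-minimal, so no length-decreasing $\ra$ steps are needed and one is left purely with the type-(2) tight-conjugation steps, yielding $\pi_w(C)\tc\pi_w(D)$.

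\emph{Main obstacle.} The delicate part is not the invocation of the finite result but verifying the dictionary between the geometry in $R_0$ and the twisted combinatorics in $W_I$: I must check that (i) the chamber distance $\dc$ within the parallel class determined by $R_0$ matches the twisted length function on $W_I$ up to the constant $\ell(n_I)$, so that $\CMin(w_0)\cap R_0$ is genuinely the twisted-minimal set (here Lemma~\ref{lemma:Lustzig} and the gate property are what make this clean), and (ii) that an elementarily strongly conjugate step in the \emph{twisted} sense inside $W_I$, with conjugator $x\in W_I$, lifts to the condition $w_0\simx{x}x\inv w_0x$ with the correct length additivity $\ell(w_0 x)=\ell(w_0)+\ell(x)$ or $\ell(x\inv w_0)=\ell(x)+\ell(w_0)$ required by Definition~\ref{definition:strconj}; this again rests on~\eqref{eqn:Lustzig} splitting off the $N_I$-part. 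Once this correspondence is pinned down, both claims follow formally, and I would keep the write-up focused on establishing that correspondence rather than reproving anything about finite Coxeter groups.
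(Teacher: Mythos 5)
Your proposal is correct and follows essentially the same route as the paper's proof: reduce to the standard residue $u\inv R$ with stabiliser $W_I$, decompose $u\inv wu$ via Lemma~\ref{lemma:Lustzig}, translate conjugation by $W_I$ into $\delta$-twisted conjugation inside the finite group $W_I$, check via~\eqref{eqn:Lustzig} that $D$ corresponds to a twisted-minimal element, invoke the Geck--Kim--Pfeiffer/He--Nie theorem, and lift back using the length additivity of the $N_I$-part. The only cosmetic differences are the order of the semidirect factors and the handling of the second claim (the paper deduces it from Lemma~\ref{lemma:equivalence_relations}(3), you observe directly that $w_I$ is already twisted-minimal); both work.
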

\begin{proof}
Let $u,v\in W$ be such that $C=uC_0$ and $D=vC_0$.
As $u\inv R$ is a standard spherical residue, there is some spherical subset $I\subseteq S$ such that $\Stab_W(u\inv R)=W_I$. By assumption, $u\inv w u$ normalises $W_I$, and hence there exist by Lemma~\ref{lemma:Lustzig} some $w_I\in W_I$ and $n_I\in N_I$ such that $u\inv w u=n_Iw_I$. Moreover, as $vC_0\in R$, there is some $x\in W_I$ such that $v=ux$. Let $\delta\co W_I\to W_I$ denote the diagram automorphism of $W_I$ defined by $\delta(z):=n_I\inv zn_I$. Then
$$v\inv wv=x\inv n_Iw_Ix=n_I\cdot \delta(x)\inv w_Ix.$$

Note that the element $\delta(x)\inv w_Ix$ is of minimal length in its $\delta$-twisted conjugacy class $\OOO_{\delta}(w_I):=\{\delta(z)\inv w_Iz \ | \ z\in W_I\}$: otherwise, we find some $z\in W_I$ such that $\ell(\delta(xz)\inv w_Ixz)<\ell(\delta(x)\inv w_Ix)$. We then deduce from (\ref{eqn:Lustzig}) in Lemma~\ref{lemma:Lustzig} that $$\ell((vz)\inv wvz)=\ell(n_I\cdot\delta(xz)\inv w_Ixz)<\ell(n_I\cdot \delta(x)\inv w_Ix)=\ell(v\inv wv),$$
contradicting our assumption that $v\inv wv$ is of minimal length in its conjugacy class (i.e. $vC_0\in\CMin(w)$). 

By \cite[Theorem~3.1]{HN12} applied to the Coxeter system $(W_I,I)$ and to the automorphism $\delta$ of $W_I$, we can find some $w'_I\in W_I$ of minimal length in $\OOO_{\delta}(w_I)$ such that $w_I\ra_{\delta}w'_I$ and $w'_I\sim_{\delta}\delta(x)\inv w_Ix$, where $\ra_{\delta}$ and $\sim_{\delta}$ are the analogues of $\ra$ and $\sim$ in $W_I$ for $\delta$-twisted conjugacy classes (i.e. one can transform $w_I$ into $w_I'$ by a sequence of elementary operations of the form $z\mapsto \delta(s)\inv zs$ ($z\in W_I$ and $s\in I$) where $\ell(\delta(s)\inv zs)\leq \ell(z)$, and one can transform $w_I'$ into $\delta(x)\inv w_Ix$ by a sequence of elementary operations of the form $z\mapsto \delta(y)\inv zy$ ($z,y\in W_I$) where $\ell(\delta(y)\inv zy)=\ell(z)$ and either $\ell(\delta(y)\inv z)=\ell(y)+\ell(z)$ or $\ell(zy)=\ell(z)+\ell(y)$).

Using again (\ref{eqn:Lustzig}) and the fact that $n_I\cdot \delta(y)\inv zy=y\inv n_Izy$ for all $y,z\in W_I$, we deduce that $$n_I\cdot w_I\ra n_I\cdot w'_I\tc n_I\cdot \delta(x)\inv w_Ix,$$ that is, $u\inv wu=n_Iw_I\ra\tc n_I\cdot \delta(x)\inv w_Ix=x\inv n_Iw_Ix=v\inv wv$, proving the first claim. The second claim then follows from Lemma~\ref{lemma:equivalence_relations}(3).
\end{proof}

\begin{prop}\label{prop:translation_geom_combi}
Let $w\in W$. If $C\in\Ch(\CCC^w)\cap\CMin(w)$, then $w\ra\tc \pi_w(C)$.
\end{prop}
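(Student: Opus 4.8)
The plan is to prove that for $C = uC_0 \in \Ch(\CCC^w)\cap\CMin(w)$, we have $w \ra\tc \pi_w(C) = u\inv wu$, by inducting on the combinatorial distance $\dc(C_0,C)$ and exploiting the inductive ``generation'' of the complex $\CCC^w$ from the conditions (CM0), (CM1), (CM2). The base case $C = C_0$ is trivial since $\pi_w(C_0) = w$. The core idea is that $\CCC^w$ is the \emph{smallest} chamber subcomplex containing $C_0$ and closed under the two geometric operations encoded by (CM1) (passing to chambers on a minimal gallery from $D$ to $w^{\pm 1}D$) and (CM2) (passing from a chamber of a parallel spherical residue $R$ to the chambers of $R\cap\CMin(w)$). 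These two closure operations correspond precisely to Lemmas~\ref{lemma:ra} and \ref{lemma:approx}, which have already been set up to yield $\ra\tc$-relations on the images under $\pi_w$.

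\medskip

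First I would make the inductive structure precise. Since $\CCC^w$ is the smallest chamber subcomplex satisfying (CM0)--(CM2), every chamber $C$ of $\CCC^w$ is reached from $C_0$ by finitely many applications of the operations forcing (CM1) and (CM2); formally, one defines an increasing chain of chamber sets $A_0 = \{C_0\} \subseteq A_1 \subseteq \cdots$ where $A_{n+1}$ is obtained from $A_n$ by adding, for each $D \in A_n$ and $\varepsilon\in\{\pm1\}$, all chambers of $\Gamma(D, w^\varepsilon D)$ (the (CM1) step), and for each spherical residue $R$ with $\Stab_W(R) = \Stab_W(wR)$ meeting $A_n$, all chambers of $R\cap\CMin(w)$ (the (CM2) step). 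Then $\CCC^w = \bigcup_n A_n$, and I would induct on the least $n$ with $C \in A_n$.

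\medskip

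For the inductive step, suppose $C \in A_{n+1}$ arises from some $D \in A_n$. Crucially, since $C \in \Ch(\CCC^w)\cap\CMin(w)$ and (by Lemma~\ref{lemma:Cw_in_Cminw}, noting $w$ is of minimal length in its class as $C_0\in\CMin(w)$) the whole of $\CCC^w$ lies inside $\CMin(w)$, every intermediate chamber $D \in A_n$ also lies in $\CMin(w)$. This is exactly what lets me invoke the \emph{second} (stronger) conclusions of Lemmas~\ref{lemma:ra} and \ref{lemma:approx}, which require the source chamber to lie in $\CMin(w)$ and deliver $\tc$ rather than merely $\ra\tc$. Concretely: if $C$ was added via (CM1), then $C\in\Gamma(D,w^\varepsilon D)$ with $D\in\CMin(w)$, so Lemma~\ref{lemma:ra} gives $\pi_w(D)\tc\pi_w(C)$; if $C$ was added via (CM2), then $C\in R\cap\CMin(w)$ for a parallel spherical residue $R$ meeting $A_n$ in some chamber $D'\in\CMin(w)\cap R$, so Lemma~\ref{lemma:approx} gives $\pi_w(D')\tc\pi_w(C)$. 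By the induction hypothesis, $w\ra\tc\pi_w(D)$ (resp.\ $w\ra\tc\pi_w(D')$), and composing with the freshly obtained $\tc$-relation yields $w\ra\tc\pi_w(C)$, since the relation $\ra\tc$ is transitive under post-composition with $\tc$.

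\medskip

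The main obstacle I anticipate is the bookkeeping to guarantee that all intermediate chambers used in the construction genuinely lie in $\CMin(w)$, so that the \emph{strong} form of the two lemmas applies and one gets $\tc$ at every step rather than the weaker $\ra\tc$ (which would not compose cleanly). This hinges on the inclusion $\CCC^w\subseteq\CMin(w)$ from Lemma~\ref{lemma:Cw_in_Cminw}; I must therefore ensure the inductive construction stays inside $\CMin(w)$ throughout, which is exactly why $w$ being of minimal length in its conjugacy class is the essential hypothesis. A secondary point requiring care is verifying that $\ra\tc$ is a transitive relation (i.e.\ that one may concatenate an $\ra\tc$-chain with a further $\tc$-relation to obtain a single $\ra\tc$-relation); this follows from the definitions of $\ra$, $\tc$, and $\ra\tc$, but should be stated explicitly so the induction goes through.
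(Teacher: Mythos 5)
Your overall strategy---inducting on the number of closure steps (CM1)/(CM2) needed to generate $C$ from $C_0$ inside $\CCC^w$, and feeding each step into Lemma~\ref{lemma:ra} or Lemma~\ref{lemma:approx}---is exactly the paper's proof. However, there is a genuine gap in the way you justify the inductive step. You claim that every intermediate chamber of the generating sequence lies in $\CMin(w)$, ``by Lemma~\ref{lemma:Cw_in_Cminw}, noting $w$ is of minimal length in its class as $C_0\in\CMin(w)$''. But the proposition does \emph{not} assume that $w$ is of minimal length in its conjugacy class: the only hypothesis is that the \emph{final} chamber $C$ lies in $\Ch(\CCC^w)\cap\CMin(w)$, which says that $\pi_w(C)$ is of minimal length, not that $w=\pi_w(C_0)$ is. (And the proposition is genuinely invoked in this generality: in Proposition~\ref{prop:rasim} one only knows $\Min(w)\cap C_0\neq\varnothing$ after the reduction, which does not force $C_0\in\CMin(w)$.) Without $C_0\in\CMin(w)$, Lemma~\ref{lemma:Cw_in_Cminw} is unavailable, $\CCC^w\not\subseteq\CMin(w)$ in general, and the ``strong'' conclusions of Lemmas~\ref{lemma:ra} and \ref{lemma:approx} (which require the \emph{source} chamber to lie in $\CMin(w)$ and output $\tc$) cannot be applied at every step. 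Your induction, as written, therefore breaks at the very first step whenever $C_0\notin\CMin(w)$.

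The repair is to use the weak conclusions of the two lemmas and to track where the generating path enters $\CMin(w)$. Along a fixed sequence $C_0=E_0,E_1,\dots,E_N=C$ of steps of type (CM1) or (CM2), observe that once some $E_i\in\CMin(w)$, all later $E_j$ lie in $\CMin(w)$: a (CM1) step preserves $\CMin(w)$ by $w$-convexity (Lemma~\ref{lemma:CMin_wconvex}), and a (CM2) step lands in $R\cap\CMin(w)$ by definition. Hence the indices with $E_j\in\CMin(w)$ form a terminal segment of the path; moreover every (CM2) step lands in $\CMin(w)$, so all steps issued from chambers outside $\CMin(w)$, except possibly the last such step, are of type (CM1) and contribute only $\pi_w(E_{j})\ra\pi_w(E_{j+1})$. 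The resulting chain of relations thus has the shape $\ra\cdots\ra$, then at most one $\ra\tc$, then $\tc\cdots\tc$, which composes (by transitivity of $\ra$ and of $\tc$) to a single relation of the form $w\ra w''\tc\pi_w(C)$, i.e.\ $w\ra\tc\pi_w(C)$. This is the bookkeeping your ``main obstacle'' paragraph correctly identifies as the crux, but resolves with an inclusion that does not hold under the stated hypotheses.
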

\begin{proof}
Note that $w=\pi_w(C_0)$. By definition of $\CCC^w$, the chamber $C$ can be obtained from the chamber $C_0$ after performing a finite sequence of steps of one of the following two types:
\begin{enumerate}
\item[(I)]
going from a chamber $C\in\Ch(\CCC^w)$ to a chamber $D\in\Gamma(C,w^{\varepsilon}C)$ for some $\varepsilon\in\{\pm 1\}$, and with $\dc(C,D)=1$.
\item[(II)]
going from a chamber $C\in R\cap\Ch(\CCC^w)$ for some spherical residue $R$ with $\Stab_W(R)=\Stab_W(wR)$ to a chamber $D\in R\cap\CMin(w)$.
\end{enumerate}
Hence the proposition follows from a straightforward induction on the number of steps of type (I) and (II) needed to go from $C_0$ to $C$, by using Lemmas~\ref{lemma:ra} and \ref{lemma:approx}.
\end{proof}

\begin{remark}\label{remark:CMin_in_Cw}
Let $w\in W$. If $\CMin(w)\subseteq \CCC^w$, then Proposition~\ref{prop:translation_geom_combi} implies that $w\ra\tc u$ for any $u$ of minimal length in the conjugacy class of $w$, thus proving Theorem~\ref{thmintro:mainthm}(1,2) in that case. This idea will be implemented in the next section to complete the proof of Theorem~\ref{thmintro:mainthm}.
\end{remark}

\section{The conjugacy problem in \texorpdfstring{$(W,S)$}{(W,S)}}\label{section:TCPIWS}

This section is devoted to the proof of Theorem~\ref{thmintro:mainthm}.

\begin{remark}\label{remark:finite_rank_reduction}
Note that, in order to prove Theorem~\ref{thmintro:mainthm}, there is no loss of generality in assuming that $(W,S)$ has finite rank (i.e. that $S$ is finite), justifying our standing assumption from the beginning of \S\ref{subsection:BD}. Indeed, if $w,w'\in W$ are conjugate, say $w'=v\inv wv$ for some $v\in W$, then there is some finite subset $J\subseteq S$ such that $w,w',v\in W_J$, and it is thus sufficient to show that $w$ and $w'$ are related by a suitable sequence of elementary operations inside the finite rank Coxeter system $(W_J,J)$.
\end{remark}

\begin{prop}\label{prop:projection}
Let $w\in W$ be of infinite order. Let $L$ be a $w$-axis, and let $\sigma\subseteq L$ be a nonempty open geodesic segment that is contained in some open cell $\supp(\sigma)$. Let $R$ be the spherical residue corresponding to $\supp(\sigma)$. Let $D$ be a chamber. Then  $\dc(C,wC)\leq\dc(D,wD)$, where $C:=\proj_R(D)$. 

In particular, if $D\in\CMin(w)$, then $\proj_R(D)\in\CMin(w)$.
\end{prop}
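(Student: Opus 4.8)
The plan is to compute chamber distances as numbers of separating walls and to compare the wall-set of the pair $(C,wC)$ with that of $(D,wD)$. Fix a point $p\in\sigma\subseteq L$; then $p$ lies in the closed chamber $C$ with $\supp(p)=\supp(\sigma)=:\sigma_0$ a face of $C$, while $wp\in w\sigma\subseteq L$ lies in $wC$, and $[p,wp]\subseteq L$ since $L$ is a geodesic line. Note that $w$ is an isometry, so $wC=w\proj_R(D)=\proj_{wR}(wD)$, exhibiting $wC$ as a projection as well. The first step is a classification of the walls $m$ separating $C$ from $wC$. Since $\sigma$ lies in the \emph{open} cell $\supp(\sigma)$, any wall either contains $\sigma_0$ (equivalently, is a wall of $R$) or is disjoint from $\sigma$; in particular $m\ni p$ iff $m$ is a wall of $R$, and $m\ni wp$ iff $m$ is a wall of $wR$. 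Hence, if $m$ separates $C$ from $wC$ but is neither a wall of $R$ nor of $wR$, then $p$ and $wp$ lie strictly on opposite sides of $m$, so the geodesic $[p,wp]\subseteq L$ crosses $m$ in a single point; thus $m$ is transverse to $L$ and therefore $w$-essential. This partitions the separating walls into the $w$-essential ones crossing the open segment $(p,wp)$, the walls of $R$, and the walls of $wR$.

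The second step brings in the gate property. By the gate property, $D$ and $C=\proj_R(D)$ lie on the same side of every wall of $R$, and applying $w$, the chambers $wD$ and $wC=\proj_{wR}(wD)$ lie on the same side of every wall of $wR$. Let $\mathcal A$ denote the set of walls separating $D$ from $R$, so that $\mathcal A$ is exactly the set of walls separating $D$ from $C$ (whence $\dc(D,C)=|\mathcal A|$), and $w\mathcal A$ is the set of walls separating $wD$ from $wC$; note that no wall of $R$ lies in $\mathcal A$ and no wall of $wR$ lies in $w\mathcal A$, as such walls pass through the corresponding residue. Encoding each wall $m$ by the indicator $\chi_m$ of a chosen side, separation is measured by $\chi_m(\cdot)\oplus\chi_m(\cdot)$ (addition mod $2$), and the above yields, for every wall $m$, the identity
\[
[\,m\text{ separates }D,wD\,]\;=\;[\,m\text{ separates }C,wC\,]\;\oplus\;[\,m\in\mathcal A\triangle w\mathcal A\,].
\]
Writing $\mathcal P$ for the set of walls separating $C$ from $wC$ and $\mathcal B:=\mathcal A\triangle w\mathcal A$, summation over all walls gives $\dc(D,wD)-\dc(C,wC)=|\mathcal B|-2\,|\mathcal B\cap\mathcal P|$. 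Thus the desired inequality $\dc(C,wC)\le\dc(D,wD)$ is equivalent to the combinatorial statement that at most half of the walls in $\mathcal B$ separate $C$ from $wC$, i.e. $|\mathcal B\cap\mathcal P|\le\tfrac12|\mathcal B|$.

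The third step, which I expect to be the main obstacle, is to prove this balancing statement. Using the classification of the first step together with the observation that $\mathcal A$ contains no wall of $R$ and $w\mathcal A$ no wall of $wR$, the walls of $\mathcal B\cap\mathcal P$ are of a restricted shape: a wall of $R$ in $\mathcal B\cap\mathcal P$ necessarily lies in $w\mathcal A$, a wall of $wR$ in $\mathcal B\cap\mathcal P$ necessarily lies in $\mathcal A$, and the remaining members are $w$-essential walls crossing $(p,wp)$. The plan is then to match each such wall injectively with a wall of $\mathcal B\setminus\mathcal P$, the natural device being the translation $w$, which permutes walls and relates the two ``ends'' of the axis-segment: one uses that $w$ carries $\mathcal A$ to $w\mathcal A$ and translates along $L$ by one period, together with the ordering in which walls of $\mathcal A$ are crossed along a minimal gallery from $D$ to $C$, to pair a ``consumed'' wall with a corresponding ``produced'' one. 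The delicate point is controlling the boundary walls --- those of $R$ and of $wR$ separating $C$ from $wC$ --- since for these the gate property fixes the position of $D$ (resp. $wD$) but leaves that of $wD$ (resp. $D$) free; verifying that the pairing is well defined and injective across these and the interior $w$-essential walls is where the real work lies. Granting the inequality, the final assertion is immediate: if $D\in\CMin(w)$ then $\dc(D,wD)$ is minimal, so $\dc(C,wC)\le\dc(D,wD)$ forces equality, whence $C=\proj_R(D)\in\CMin(w)$.
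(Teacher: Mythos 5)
Your reduction is correct as far as it goes: the identity
\[
\dc(D,wD)-\dc(C,wC)=|\mathcal B|-2|\mathcal B\cap\mathcal P|,\qquad \mathcal B:=\mathcal A\,\triangle\, w\mathcal A,
\]
follows from the $\mathbb{Z}/2$-additivity of wall separation along the chain $D,C,wC,wD$ together with the gate property, and your classification of the walls in $\mathcal P$ (walls of $R$, walls of $wR$, and $w$-essential walls crossing $(p,wp)$) is also sound. But the proof stops exactly where the proposition begins: the ``balancing statement'' $|\mathcal B\cap\mathcal P|\le\tfrac12|\mathcal B|$ is \emph{equivalent} to the inequality you are trying to prove, and you offer only a plan (``match each such wall injectively with a wall of $\mathcal B\setminus\mathcal P$ \dots the natural device being the translation $w$'') while conceding that verifying the pairing ``is where the real work lies.'' No injection is constructed, and it is far from clear that one can be built by purely local, single-period wall bookkeeping: the obstruction you yourself identify at the boundary walls of $R$ and $wR$ is real, since a ``debt'' created there need not be repaid by a wall visible within the single segment $[p,wp]$.

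The paper closes precisely this gap by a global, multi-period argument rather than a one-period pairing. After normalising $C=C_0$, it writes $w=n_Iw_I$ with $w_I\in W_I$, $n_I\in N_I$ and $\ell(w)=\ell(n_I)+\ell(w_I)$, chooses $n$ with $w^n=n_I^n$ (possible since $W_I$ is finite), and concatenates minimal galleries from $w^{i-1}D$ to $w^iD$: each such piece must cross at least $\dc(C_0,w_IC_0)$ walls of $w^iR$, giving $n\dc(D,wD)\ge\dc(D,w^nD)+n\dc(C_0,w_IC_0)$. The decisive extra input --- which your argument has no analogue of --- is that $n_I$ is straight (because $C_0$ and $n_IC_0$ are not separated by any wall containing $L$), hence $w^n=n_I^n$ is straight and therefore of minimal length in its conjugacy class, yielding $\dc(D,w^nD)\ge n\ell(n_I)$. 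Dividing by $n$ gives $\ell(w)\le\dc(D,wD)$. If you want to salvage your approach, you would either have to construct the injection explicitly (handling the walls of $R$ and $wR$ in $\mathcal B\cap\mathcal P$, which is where I expect it to break), or import a global minimality statement of the above kind; as written, the central step is missing.
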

\begin{proof}
Without loss of generality, we may assume that $C=C_0$: indeed, write $C=vC_0$ for some $v\in W$. Then $L':=v\inv L$ is an axis for $w':=v\inv w v$ containing the nonempty open geodesic segment $\sigma':=v\inv\sigma$, and $R':=v\inv R$ is the spherical residue corresponding to the cell $\supp(\sigma'):=v\inv\supp(\sigma)$ supporting $\sigma'$. Moreover, setting $D':=v\inv D$, we have $\proj_{R'}(D')=v\inv \proj_R(D)=v\inv C=C_0$. As $\dc(C,wC)=\dc(C_0,w'C_0)$ and $\dc(D,wD)=\dc(D',w'D')$, the claim follows.

Assume thus that $C=C_0$; in particular, $R$ is standard. Let $I\subseteq S$ be such that $\Stab_W(R)=W_I$. Let us show that  
\begin{equation}\label{eqn:TP}
\ell(w)=\dc(C_0,wC_0)\leq\dc(D,wD).
\end{equation}
 Note that the walls of $R$ coincide with the walls containing $\sigma$, or else with the walls containing $L$. In particular, $w$ stabilises this set of walls, so that the residues $R$ and $wR$ are parallel, and $w\in N_W(W_I)$. Write $w=n_Iw_I$ for some $w_I\in W_I$ and $n_I\in N_I$, so that $\ell(w)=\ell(n_I)+\ell(w_I)$ (see Lemma~\ref{lemma:Lustzig}). Thus the chambers $C_0$ and $n_IC_0$ lie on the same side of any wall of $R$.

Let $n\in\NN^*$ be such that $w^n=n_I^n$ (i.e. for each $r\in\NN$, there is some $w_r\in W_I$ such that $w^r=n_I^rw_r$; since $W_I$ is finite, we find some $r,s\in\NN^*$ with $r<s$ such that $w_r=w_s$, and one can take $n:=s-r$). Let $\Gamma$ be a gallery from $D$ to $w^nD$ obtained by concatenating minimal galleries $\Gamma_i$ from $w^{i-1}D$ to $w^iD$ for $i=1,\dots,n$. Thus $\ell(\Gamma)=n\dc(D,wD)$. 

Since $D,C_0,n_IC_0$ lie on the same side of any wall of $R$ (equivalently, of $wR$), we have $\proj_{wR}(D)=n_IC_0$, and hence $$\proj_R(w\inv D)=w\inv\proj_{wR}(D)=w\inv n_IC_0=w_I\inv C_0.$$
In particular, for each $i\in\{1,\dots,n\}$, the number of walls of $R$ (or equivalently, of $w^{i}R$) crossed by $\Gamma_i$ is
$$\dc(\proj_{w^{i}R}(w^{i-1}D),\proj_{w^iR}(w^iD))=\dc(\proj_{R}(w\inv D),\proj_{R}(D))=\dc(C_0,w_IC_0).$$
As $\ell(\Gamma)$ is also the number of times $\Gamma$ crosses a wall, and as $D$ and $w^nD=n_I^nD$ lie on the same side of any wall of $R$, we deduce that
\begin{equation*}
n\dc(D,wD)=\ell(\Gamma)\geq \dc(D,w^nD)+n\dc(C_0,w_IC_0).
\end{equation*}
In particular,
\begin{equation}\label{eqn:straight_cons}
\ell(w)=\ell(n_I)+\dc(C_0,w_IC_0)\leq\ell(n_I)+\dc(D,wD)-\frac{\dc(D,w^nD)}{n}.
\end{equation}

Finally, note that $L$ is also an $n_I$-axis, as $n_Ix=n_Iw_Ix=wx\in L$ for any $x\in L$. As $C_0$ and $n_IC_0$ are not separated by any wall containing $L$ (that is, by any wall of $R$), it follows from \cite[Lemma~4.3]{straight} that $n_I$ is straight. Hence $$n\ell(n_I)=\dc(C_0,n_I^nC_0)=\dc(C_0,w^nC_0).$$ Moreover, $n_I^n=w^n$ is straight as well, hence of minimal length in its conjugacy class by Lemma~\ref{lemma:straight_lemma_4.1}. In particular, $\dc(C_0,w^nC_0)\leq\dc(D,w^nD)$. Therefore, $\dc(D,w^nD)\geq n\ell(n_I)$, and (\ref{eqn:TP}) follows from (\ref{eqn:straight_cons}).
\end{proof}

\begin{prop}\label{prop:rasim}
Let $w\in W$ be of infinite order, and let $u$ be of minimal length in the conjugacy class of $w$. Then $w\ra\tc u$. If, moreover, $w$ is straight, then $w\ra u$. 
\end{prop}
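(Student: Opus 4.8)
The plan is to deduce the proposition from the machinery of Sections \ref{section:TCC}--\ref{section:TCCw}, with Proposition~\ref{prop:projection} as the geometric engine. For the first assertion $w\ra\tc u$, by Remark~\ref{remark:CMin_in_Cw} it suffices to prove the inclusion $\CMin(w)\subseteq\CCC^w$: granting this, one picks a chamber $D\in\CMin(w)$ with $\pi_w(D)=u$ and applies Proposition~\ref{prop:translation_geom_combi} to obtain $w\ra\tc\pi_w(D)=u$. For the straight refinement, I would instead use that a straight $w$ is of minimal length in its conjugacy class (Lemma~\ref{lemma:straight_lemma_4.1}), so that $C_0\in\CMin(w)$; then by Lemma~\ref{lemma:connection_gallery_Cw} it is enough to show that $\CMin(w)$ is \emph{gallery-connected}, which yields $w=\pi_w(C_0)\ra\pi_w(D)=u$ with cyclic shifts only.

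For the geometric setup, since $w$ has infinite order I fix a $w$-axis $L$ and, as in \S\ref{subsection:W}, for a point of $L$ an open geodesic segment $\sigma\subseteq L$ lying in an open cell meeting no $w$-essential wall; let $R$ be the associated spherical residue. The walls of $R$ are exactly the walls through $L$, so $R$ and $wR$ are parallel and $\Stab_W(R)=\Stab_W(wR)$. Proposition~\ref{prop:projection} then shows that $\proj_R$ does not increase displacement; applied to any $D\in\CMin(w)$ it gives $\proj_R(D)\in R\cap\CMin(w)$, so that $R\cap\CMin(w)\neq\varnothing$. The point of this is that, as soon as a single chamber of $R$ lies in $\CCC^w$, condition (CM2) forces the whole of $R\cap\CMin(w)$ into $\CCC^w$; and the same holds for every translate $w^kR$ along the axis.

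The core of the argument is then to connect an arbitrary $D\in\CMin(w)$ to such an axis residue within $\CCC^w$. The idea is that type-(I) moves, i.e.\ minimal galleries toward $w^{\pm1}D$ (Lemma~\ref{lemma:ra}), stay in $\CCC^w$ by $w$-convexity (CM1) and translate chambers \emph{along} the axis direction; the absorption of successive residues $w^kR$ by (CM2), together with Lemma~\ref{lemma:approx} to cross a fixed residue by tight conjugations (type-(II)), then fills out a neighbourhood of $L$ inside $\CCC^w$. Using that $\Min(w)$ is convex and, via its standard product decomposition $\Min(w)\cong L\times Y$ together with the finiteness of $w$-essential walls crossing a bounded segment (\S\ref{subsection:W}), that $\CMin(w)$ lies in a bounded neighbourhood of $\Min(w)$, one propagates membership in $\CCC^w$ by induction on the chamber distance from $D$ to the axis. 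I expect this transverse induction to be the main obstacle: minimal galleries toward $w^{\pm1}D$ only produce motion \emph{along} $L$, so all progress \emph{across} the cross-section $Y$ must be extracted from projecting onto axis residues (Proposition~\ref{prop:projection}) and crossing them through (CM2)/Lemma~\ref{lemma:approx}; controlling this transverse motion is delicate precisely because $\CMin(w)$ may be disconnected there, which is exactly why genuine tight conjugations are needed and why the conclusion is $\ra\tc$ rather than $\ra$.

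Finally, in the straight case the cross-section degenerates: Lemma~\ref{lemma:Cmin_straight_R} shows that $\pi_w$ is constant on each $R\cap\CMin(w)$, so the residue-absorption steps can be realised by honest galleries inside $\CMin(w)$ and condition (CM2) is never invoked. This upgrades the above to gallery-connectedness of $\CMin(w)$, and hence, by Lemma~\ref{lemma:connection_gallery_Cw}, to $w\ra u$, completing the sharper statement.
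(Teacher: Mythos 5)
Your proposal correctly assembles the surrounding machinery (Proposition~\ref{prop:translation_geom_combi}, conditions (CM1)--(CM2), Lemmas~\ref{lemma:ra} and \ref{lemma:approx}, and Proposition~\ref{prop:projection} as the tool that puts $\proj_{R}(D)$ into $R\cap\CMin(w)$), but the heart of the proof --- actually getting from $C_0$ to a chamber representing $u$ inside $\CCC^w$ --- is exactly the ``transverse induction'' you flag as the main obstacle and do not carry out, so there is a genuine gap. Two ideas are missing. First, the paper begins with a reduction via \cite[Corollary~3.5]{straight}: after replacing $w$ and $u$ by $\kappa$-equivalent elements $w_1,u_1$, one may assume $\Min(w)\cap C_0\neq\varnothing$ and $\Min(u)\cap C_0\neq\varnothing$; this is what makes the target chamber $vC_0$ (with $u=v\inv wv$) meet $\Min(w)$ at the point $vx_u$. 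Second, the induction is not on the chamber distance to a \emph{fixed} axis $L$ with a cross-section $Y$; it runs along the $\CAT$ geodesic $Z=[x_w,vx_u]\subseteq\Min(w)$, through the chambers of $\Gamma(C_0,vC_0)$ meeting $Z$. At each step one takes a point $x\in Z$ on the common panel of two consecutive chambers, the $w$-axis $L$ \emph{through that point} (a different axis at each step), and the residues $R_x\supseteq R_\sigma$ there; the decisive technical point is the wall dichotomy showing that the projection galleries $D\to\proj_{R_\sigma}(D)$ and $\proj_{R_\sigma}(vC_0)\to\proj_{R_x}(vC_0)$ stay in $\CCC^w$ (a wall crossed is either transverse to $L$, whence (CM1) applies since $x$ lies in the closure of both chambers, or contains $L$, contradicting the projection property). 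Your single-axis picture cannot substitute for this: chambers of $\CMin(w)$ far along $Z$ are controlled by axes through $Z$ near them, not by the residue $R$ of one chosen axis, and nothing in your sketch produces the galleries inside $\CCC^w$ needed before (CM2) can be invoked. Note also that the paper proves only that the one relevant chamber $vC_0$ lies in $\CCC^w$, not the full inclusion $\CMin(w)\subseteq\CCC^w$ you propose to establish, which is stronger than needed.

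On the straight case, your reduction to gallery-connectedness of $\CMin(w)$ is not what the paper does and does not follow from your own argument: Lemma~\ref{lemma:Cmin_straight_R} says $\pi_w$ is \emph{constant} on $R\cap\CMin(w)$, which lets one skip over a possibly disconnected residue without any gallery (one concludes $\pi_w(D')=\pi_w(E')$ directly), whereas it does not manufacture galleries inside $\CMin(w)$ between those chambers. The correct conclusion is $w\ra u$ via $\pi_w(D)\ra\pi_w(D')=\pi_w(E')\ra\pi_w(E)$, with the two outer arrows coming from Lemma~\ref{lemma:connection_gallery_Cw} applied to the projection galleries, which lie in $\CCC^w\subseteq\CMin(w)$ by Lemma~\ref{lemma:Cw_in_Cminw}; gallery-connectedness of $\CMin(w)$ is neither claimed nor needed.
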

\begin{proof}
By Lemma~\ref{lemma:axisthroughC0}, we find some $w_1,u_1\in W$ with $w\to w_1$ and $u\to u_1$, such that there exist some $x_w\in\Min(w_1)\cap C_0$ and some $x_u\in\Min(u_1)\cap C_0$. Note that $u_1$ is still of minimal length in its conjugacy class and $u_1\to u$; similarly, if $w$ is straight, then $w_1$ is still straight by Lemma~\ref{lemma:straight_lemma_4.2}. In view of Lemma~\ref{lemma:equivalence_relations}(3), there is thus no loss of generality in assuming that $w=w_1$ and $u=u_1$.

Let $v\in W$ be such that $u=v\inv wv$. In particular, $vC_0\in\CMin(w)$. Moreover, $Z:=[x_w,vx_u]\subseteq\Min(w)$ as $\Min(w)$ is convex. Let $\Gamma_Z(C_0,vC_0)$ be the set of chambers of $\Gamma(C_0,vC_0)$ intersecting $Z$ nontrivially (note that there always exists a minimal gallery from $C_0$ to $vC_0$ containing $Z$, see \cite[Lemma~3.1]{straight}).

\medskip
\noindent
{\bf Claim:} \emph{Let $D\in\Gamma_Z(C_0,vC_0)\cap\Ch(\CCC^w)$ with $D\neq vC_0$. Then there exists some $E\in\Gamma_Z(C_0,vC_0)\cap\Ch(\CCC^w)$ with $\dc(E,vC_0)<\dc(D,vC_0)$. If, moreover, $w$ is straight, then $\pi_w(D)\to\pi_w(E)$.}

\medskip

Indeed, let $x_D\in D\cap Z$, and let $D=D_0,D_1,\dots,D_k=vC_0$ be a minimal gallery from $D$ to $vC_0$ containing $[x_D,vx_u]$ ($k\geq 1$). Let $x\in (D_0\cap D_1)\cap [x_D,vx_u]$. Let also $L$ be the $w$-axis through $x$, and let $\sigma\subseteq L$ be a nonempty open geodesic segment containing $x$ in its closure and contained in some (open) cell $\supp(\sigma)$. Let $R_x$ (resp. $R_{\sigma}$) be the spherical residue consisting of all chambers containing $x$ (resp. $\sigma$). In particular, $R_{\sigma}\subseteq R_x$ and $D\in R_x$. Let $D':=\proj_{R_{\sigma}}(D)$, $E=\proj_{R_x}(vC_0)$ and $E':=\proj_{R_{\sigma}}(vC_0)=\proj_{R_{\sigma}}(E)$ (see Figure~\ref{figure:Prop63}). Note that $E\in\Gamma_Z(C_0,vC_0)$, because $\dc(vC_0,D)=\dc(vC_0,E)+\dc(E,D)$ by the gate property. Moreover, $D\neq E$, for otherwise $\dc(vC_0,D_1)=\dc(vC_0,D)+1$, a contradiction. In particular, $\dc(E,vC_0)<\dc(D,vC_0)$.

Let now $\Gamma_D=(D=D'_0,D'_1,\dots,D'_l=D')$ be a minimal gallery from $D$ to $D'$. We claim that $\Gamma_D\subseteq\CCC^w$. Indeed, assume for a contradiction that there is some $i\in\{1,\dots,l\}$ such that $D'_{i-1}\in\Ch(\CCC^w)$ but $D'_i\notin\Ch(\CCC^w)$. Let $m$ be the wall separating $D'_{i-1}$ from $D'_i$. Note that $x\in D'_{i-1}\cap D'_i$. Hence if $L$ is transverse to $m$, then we find some $\varepsilon\in\{\pm 1\}$ such that $D'_{i-1}$ and $w^{\varepsilon}D'_{i-1}$ lie on different sides of $m$. Hence in that case, $D'_i\in\Gamma(D'_{i-1},w^{\varepsilon}D'_{i-1})$, so that $D'_i\in\Ch(\CCC^w)$ by (CM1), a contradiction (this is illustrated for $i=2$ on Figure~\ref{figure:Prop63}). Thus $m$ contains $L$. In particular, $m$ contains $\sigma$, and hence $m$ is a wall of $R_{\sigma}$. But by definition of $D'$, the minimal gallery $\Gamma_D$ does not cross any wall of $R_{\sigma}$, a contradiction. This proves that $\Gamma_D\subseteq\CCC^w$, and hence in particular that $D'\in\Ch(\CCC^w)$.

We next claim that $E'\in\Ch(\CCC^w)$. Indeed, note that the walls of $R_{\sigma}$ coincide with the walls containing $L$. In particular, $w$ stabilises this set of walls. In other words, $\Stab_W(R_{\sigma})=\Stab_W(wR_{\sigma})$. On the other hand, since $vC_0\in\CMin(w)$ by assumption, Proposition~\ref{prop:projection} implies that $E'\in\CMin(w)$. Hence $E'\in\Ch(\CCC^w)$ by (CM2).

Finally, let $\Gamma_E$ be a minimal gallery from $E'$ to $E$. Then $\Gamma_E\subseteq\CCC^w$, for exactly the same reasons that $\Gamma_D\subseteq\CCC^w$. In particular, $E\in\Ch(\CCC^w)$.

If, moreover, $w$ is straight (in particular, $w$ is of minimal length in its conjugacy class by Lemma~\ref{lemma:straight_lemma_4.1}), then $\Gamma_D,\Gamma_E\subseteq\CCC^w\subseteq\CMin(w)$ by Lemma~\ref{lemma:Cw_in_Cminw}, and hence $\pi_w(D)\to\pi_w(D')$ and $\pi_w(E')\to\pi_w(E)$ by Lemma~\ref{lemma:connection_gallery_Cw}. As $\pi_w(D')=\pi_w(E')$ by Lemma~\ref{lemma:Cmin_straight_R}, this proves the claim. 

\medskip

The claim readily implies that $vC_0\in\Ch(\CCC^w)$, so that $w\ra\tc u$ by Proposition~\ref{prop:translation_geom_combi}. If, moreover, $w$ is straight, then the claim implies that $w=\pi_w(C_0)\to\pi_w(vC_0)=u$, as desired.
\end{proof}

\begin{figure}

  \centering
  \includegraphics[trim = 36mm 191mm 91mm 28mm, clip, width=9cm]{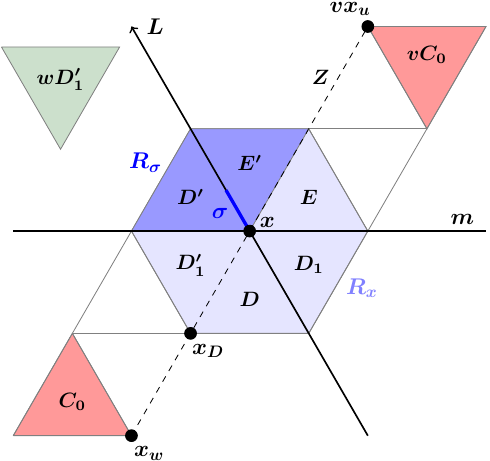}
  \captionof{figure}{Proof of Proposition~\ref{prop:rasim}}
  \label{figure:Prop63}

\end{figure}

\begin{lemma}\label{lemma:sph_conj}
Let $I,J\subseteq S$ be such that $W_I$ and $W_J$ are conjugate. Then there is some $x\in W$ with $x\inv Ix=J$ such that $w\tc x\inv wx$ for all $w\in W_I$.
\end{lemma}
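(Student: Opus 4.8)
The plan is to reduce the statement to a single ``elementary conjugation'' between standard parabolic subgroups, and to realise each such elementary conjugation by one explicit elementary tight conjugation per element $w$.

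First I would invoke the (known) structural description of conjugacy of parabolic subgroups (cf. \cite{Kra09}): if $W_I$ and $W_J$ are conjugate, then $I$ and $J$ are linked by a finite sequence $I=I_0,I_1,\dots,I_n=J$ of subsets of $S$, in which each consecutive pair $I_{k-1},I_k$ is related by an \emph{elementary} move, namely there is a spherical subset $K_k\subseteq S$ with $I_{k-1}\setminus K_k$ centralising $W_{K_k}$ (equivalently $w_{K_k}I_{k-1}w_{K_k}\subseteq S$) and $I_k=\sigma_{K_k}(I_{k-1}\cap K_k)\cup(I_{k-1}\setminus K_k)$, where $w_{K_k}$ is the longest element of the finite group $W_{K_k}$ and $\sigma_{K_k}$ the diagram automorphism of $W_{K_k}$ it induces. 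The associated conjugator $d_k:=w_{K_k}$ then satisfies $d_k\inv I_{k-1}d_k=I_k$. Setting $x:=d_1\cdots d_n$, one gets $x\inv Ix=J$, and for $w\in W_I$ the intermediate conjugates $w^{(k)}:=d_k\inv\cdots d_1\inv w\, d_1\cdots d_k\in W_{I_k}$ interpolate between $w^{(0)}=w$ and $w^{(n)}=x\inv wx$. By transitivity of $\tc$, it then suffices to treat a single elementary step, i.e. to show $w^{(k-1)}\tc w^{(k)}$ for all $w\in W_I$.

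So I fix an elementary step with data $K:=K_k$, and set $v:=w^{(k-1)}\in W_{I_{k-1}}$. Since $I_{k-1}\setminus K$ centralises $W_K$, there is a direct product decomposition $W_{I_{k-1}}=W_{I_{k-1}\cap K}\times W_{I_{k-1}\setminus K}$, so that $v=ab$ with $a\in W_{I_{k-1}\cap K}\subseteq W_K$ and $b\in W_{I_{k-1}\setminus K}$ centralising $W_K$. The key observation is that the single element $y:=a\inv w_K\in W_K$ realises the required conjugation: a direct computation gives $y\inv vy=w_Kvw_K=w^{(k)}$ (using that $a,b$ commute and that $w_K$ centralises $b$), and the relevant lengths add up. Indeed $vy=bw_K$, whence $\ell(vy)=\ell(b)+\ell(w_K)=\ell(v)+\ell(y)$, using $\ell(ab)=\ell(a)+\ell(b)$ and $\ell(bw_K)=\ell(b)+\ell(w_K)$ by disjointness of supports together with $\ell(a\inv w_K)=\ell(w_K)-\ell(a)$ since $w_K$ is longest in $W_K$. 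As $\ell(w^{(k)})=\ell(v)$ (because $\sigma_K$ preserves lengths), this exhibits $v\simx{y}w^{(k)}$; and since $K$ is spherical, $v\in N_W(W_K)$ (as $a\in W_K$ and $b$ centralises $W_K$) and $y\in W_K$, this is exactly an elementary tight conjugation via condition (2) of Definition~\ref{definition:tightconj}. Chaining over $k$ yields $w\tc x\inv wx$, as desired. When $I$ is spherical one may take $b$ trivial throughout, and $y=v\inv w_K$ is then the clean special case.

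The step I expect to be the main obstacle is the reduction in the first paragraph: establishing, or pinning down in the literature, the structural description of conjugacy of parabolic subgroups in the precise form above, where each elementary move alters only a spherical ``part'' $I\cap K$ while fixing a commuting complement $I\setminus K$. This form is exactly what makes the argument apply uniformly, including to non-spherical (infinite) $W_I$, for which $W_I$ cannot be housed inside any finite $W_K$ and the commuting complement $b$ genuinely intervenes. Once this reduction is granted, the rest is the elementary length bookkeeping above; in particular, no appeal to the finite-group results of \cite{GP93,GKP00} is needed for this lemma.
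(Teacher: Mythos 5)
Your single-step computation is correct and, modulo the choice of conjugator, this is essentially the paper's argument: both proofs reduce to a Deodhar--Howlett--Krammer chain of elementary moves between standard parabolic subgroups, and then realise each move as one elementary tight conjugation of type (2) of Definition~\ref{definition:tightconj}, inside the spherical component $W_K$, using the direct product decomposition $w=ab$ (the paper's $w=w(I')\cdot w(K^s)$) together with the identity $\ell(w_Kz)=\ell(w_K)-\ell(z)$ for $z\in W_K$. The genuine gap is exactly the one you flag yourself: the ``structural description'' you invoke in the first paragraph is not the statement available in the literature. What \cite[Theorem~3.1.3]{Kra09} (see also \cite[Proposition~5.5]{Deo82}) provides is a chain $I=I_0,\dots,I_{k+1}=J$ whose elementary conjugators are $\nu(I_i,s_i)=w_{K_i\setminus\{s_i\}}w_{K_i}$, with $K_i$ the (spherical) connected component of $I_i\cup\{s_i\}$ containing $s_i$ --- not the bare longest elements $w_{K_i}$ --- and the induced map on $I_i\cap K_i$ is $\sigma_{K_i}\circ\sigma_{K_i\setminus\{s_i\}}$ rather than $\sigma_{K_i}$. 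So your normal form, with $d_k=w_{K_k}$, cannot simply be cited.

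The gap is fixable in two ways. Either derive your normal form from the standard one: each $\nu(I,s)$-move factors as conjugation by $w_{K\setminus\{s\}}$ (which maps $I$ to itself, since $\sigma_{K\setminus\{s\}}$ permutes $I\cap K=K\setminus\{s\}$ and $I\setminus K$ centralises $W_{K\setminus\{s\}}$) followed by conjugation by $w_K$, and both factors are elementary moves in your sense, so your chain exists with the same endpoint $J$. Or --- as the paper does --- work directly with the conjugator $x:=w_{K\setminus\{s\}}w_K$: one still has $x\in W_K$ and $w\in N_W(W_K)$, and the same longest-element bookkeeping gives $\ell(x\inv w)=\ell(x)+\ell(w)$, so that $w\simx{x}x\inv wx$ is again an elementary tight conjugation of type (2), now with a conjugator independent of $w\in W_I$. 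Either repair completes your argument; without one of them the first paragraph is an unproven assertion on which the whole reduction rests. (Your closing remark that no appeal to \cite{GP93} or \cite{GKP00} is needed for this lemma is correct and agrees with the paper.)
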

\begin{proof}
By \cite[Proposition~3.1.6]{Kra09}, there exists some $x\in W$ such that $x\inv \Pi_I=\Pi_J$. Hence by \cite[Theorem~3.1.3]{Kra09} (see also \cite[Proposition~5.5]{Deo82}), we find a sequence $I=I_0,I_1,\dots,I_{k+1}=J$ of subsets of $S$ and elements $s_i\in S\setminus I_i$ ($i=0,\dots,k$) such that the following hold for each $i\in\{0,\dots,k\}$:
\begin{enumerate}
\item
In the Coxeter diagram of $(W,S)$, the connected component $K_i$ of $I_i\cup\{s_i\}$ containing $s_i$ is spherical. We set $x_i:=w_{K_i\setminus\{s_i\}}w_{K_i}$, where for a spherical subset $T\subseteq S$ we denote by $w_T$ the longest element of $W_T$.
\item
$x_i\inv\Pi_{I_i}=\Pi_{I_{i+1}}$.
\item
$I_{i+1}=(I_i\cup\{s_i\})\setminus\{t_i\}$ for some $t_i\in K_i$.
\item
$x=x_0\dots x_k$ and $\ell(x)=\sum_{j=0}^k\ell(x_j)$.
\end{enumerate}
(Note that $x_i$ is denoted $\nu(I_i,s_i)$ in \cite[Theorem~3.1.3]{Kra09}.)
Let $w\in W_I$. Reasoning inductively on $k$, it is sufficient to show that if $J=(I\cup\{s\})\setminus\{t\}$ for some $s\in S\setminus I$ and some $t\in K$, where $K$ is the connected component of $I\cup\{s\}$ containing $s$ ($K$ is spherical), then $w\simx{x}x\inv wx$, where $x:=w_{K\setminus\{s\}}w_{K}\in W_K$ (note that $w$ normalises $W_K$).

Set $I':=I\setminus K$, so that $I'$ and $K$ are not connected in $I'\cup K=I\cup\{s\}$. Write $w=w(I')\cdot w(K^s)$ with $w(I')\in W_{I'}$ and $w(K^s)\in W_{K^s}$, where we set for short $K^s:=K\setminus\{s\}=I\setminus I'$. We have to show that $\ell(x\inv w)=\ell(x)+\ell(w)$. Recall from \cite[Proposition~1.77]{BrownAbr} that for any spherical subset $T\subseteq S$ and any $v\in W_T$, we have $w_T=w_T\inv$ and $\ell(w_Tv)=\ell(vw_T)=\ell(w_T)-\ell(v)$. Hence,
\begin{align*}
\ell(x\inv w)&=\ell(w_K\inv \cdot w_{K^s}\inv w(K^s))+\ell(w(I'))=\ell(w_K)-\ell(w_{K^s}\inv w(K^s))+\ell(w(I'))\\
&=\ell(w_K)-\ell(w_{K^s})+\ell(w(K^s))+\ell(w(I'))=\ell(w_{K^s}w_K)+\ell(w(K^s)w(I'))\\
&=\ell(x)+\ell(w),
\end{align*}
as desired.
\end{proof}

\begin{prop}\label{prop:main_result_finite}
Let $w\in W$ be of finite order, and let $u$ be of minimal length in the conjugacy class of $w$. Then $w\ra \tc u$.
\end{prop}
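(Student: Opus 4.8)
The plan is to follow the proof of Proposition~\ref{prop:rasim} almost verbatim, the only change being that the $w$-axis used there (which is unavailable when $w$ has finite order) is replaced by the fixed-point set, and Proposition~\ref{prop:projection} is replaced by an easier projection statement. Since $w$ has finite order, $|w|=0$ and $\Min(w)=\mathrm{Fix}(w)$ is a nonempty convex subset fixed pointwise by $w$. As in the first paragraph of the proof of Proposition~\ref{prop:rasim}, I would first invoke \cite[Corollary~3.5]{straight} together with Lemma~\ref{lemma:equivalence_relations}(1,3) to reduce to the situation where there exist $x_w\in\Min(w)\cap C_0$ and $x_u\in\Min(u)\cap C_0$; here $u$ is again of finite order (being conjugate to $w$) and remains of minimal length after the cyclic shifts. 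Writing $u=v\inv wv$, so that $vC_0\in\CMin(w)$, the segment $Z:=[x_w,vx_u]$ then lies in $\Min(w)$: indeed $wx_w=x_w$ and $w(vx_u)=v(ux_u)=vx_u$ since $ux_u=x_u$, and $\Min(w)$ is convex.

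The key ingredient is the following finite-order substitute for Proposition~\ref{prop:projection}: if $x\in\Min(w)$ and $R_x$ denotes the spherical residue of all chambers containing $x$, then $\proj_{R_x}$ maps $\CMin(w)$ into itself. This is immediate: since $wx=x$ we have $wR_x=R_x$, hence $\Stab_W(R_x)=\Stab_W(wR_x)$ and $\proj_{R_x}$ is $w$-equivariant (namely $w\proj_{R_x}(D)=\proj_{wR_x}(wD)=\proj_{R_x}(wD)$); as projections do not increase chamber distance, $\dc(\proj_{R_x}(D),w\proj_{R_x}(D))\leq\dc(D,wD)$, so $\proj_{R_x}(D)\in\CMin(w)$ whenever $D\in\CMin(w)$. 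Note that, because $w\in\Stab_W(x)=\Stab_W(R_x)$ (and not merely normalises it), the factor $n_I$ in the decomposition of Lemma~\ref{lemma:approx} relative to $R_x$ is trivial, so the diagram automorphism $\delta$ occurring there is the identity; consequently the instances of Lemma~\ref{lemma:approx} that are triggered (through Proposition~\ref{prop:translation_geom_combi}) appeal only to the untwisted Geck--Pfeiffer theorem \cite{GP93} rather than to \cite{HN12}, matching the role ascribed to \cite{GP93} in the introduction.

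With this at hand, I would prove by descending induction on $\dc(D,vC_0)$ the analogue of the Claim in Proposition~\ref{prop:rasim}: every $D\in\Gamma_Z(C_0,vC_0)\cap\Ch(\CCC^w)$ with $D\neq vC_0$ has a successor $E\in\Gamma_Z(C_0,vC_0)\cap\Ch(\CCC^w)$ with $\dc(E,vC_0)<\dc(D,vC_0)$. The argument is actually shorter than in Proposition~\ref{prop:rasim}: choosing $x_D\in D\cap Z$, a minimal gallery $D=D_0,\dots,D_k=vC_0$ containing $[x_D,vx_u]$ (see \cite[Lemma~3.1]{straight}), and the crossing point $x\in(D_0\cap D_1)\cap[x_D,vx_u]\subseteq\Min(w)$, one may simply set $E:=\proj_{R_x}(vC_0)$, with no need for an auxiliary residue $R_\sigma\subsetneq R_x$. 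Then $E\in\CMin(w)$ by the projection statement; $x\in E\cap Z$ gives $E\in\Gamma_Z(C_0,vC_0)$; the gate property yields $\dc(D,vC_0)=\dc(D,E)+\dc(E,vC_0)$ with $D\neq E$ (otherwise $D_1\in R_x$ would be strictly closer to $vC_0$ than its gate), whence $\dc(E,vC_0)<\dc(D,vC_0)$; and $E\in\Ch(\CCC^w)$ by (CM2), since $R_x$ is spherical, $\Stab_W(R_x)=\Stab_W(wR_x)$, $D\in R_x\cap\Ch(\CCC^w)$ and $E\in R_x\cap\CMin(w)$. Starting from $C_0\in\Ch(\CCC^w)$ (which lies in $\Gamma_Z(C_0,vC_0)$ as $x_w\in C_0\cap Z$) and iterating, one reaches $vC_0\in\Ch(\CCC^w)$, and then $w\ra\tc\pi_w(vC_0)=u$ by Proposition~\ref{prop:translation_geom_combi}.

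The step I expect to be the crux is the projection statement, and more precisely the realization that finite order is exactly what lets one work with the single $w$-stable residue $R_x$ attached to a fixed point $x\in Z$: this is what replaces the axis and Proposition~\ref{prop:projection}, while simultaneously forcing the relevant twist to be trivial. A secondary point to check carefully is that \cite[Corollary~3.5]{straight} does apply to finite-order elements (it is stated for general $w$, and for such $w$ it amounts to cyclically shifting $w$ into the stabiliser of a point of $\overline{C_0}$). Alternatively --- and this is presumably where Lemma~\ref{lemma:sph_conj} would enter --- one could instead reduce, via cyclic shifts and Lemma~\ref{lemma:sph_conj}, to the case where $w$ and $u$ lie in a common finite standard parabolic subgroup and invoke \cite{GP93} there directly; the geometric route above, however, seems cleaner and uniform with Proposition~\ref{prop:rasim}.
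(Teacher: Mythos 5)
Your proof is correct, but it follows a genuinely different route from the paper's. The paper proves Proposition~\ref{prop:main_result_finite} by the purely combinatorial argument you sketch as an ``alternative'' in your last sentences: reduce to $w$ of minimal length via \cite[Corollary~C]{straight}, use \cite[Proposition~4.2]{CF10} to see that $\Pc(w)=W_I$ and $\Pc(u)=W_J$ are standard (and spherical, since $w$ has finite order), apply Lemma~\ref{lemma:sph_conj} to get $w\tc w'$ for some $w'\in W_J$, and conclude with \cite[Theorem~3.1(2)]{HN12} in the finite group $W_J$, where strong and tight conjugation coincide. Your geometric argument --- running Proposition~\ref{prop:rasim} with the fixed-point set $\Min(w)$ in place of a $w$-axis --- also works: the projection statement you isolate is exactly Lemma~\ref{lemma:proj_finite_order}, which the paper proves in Section~\ref{section:COMAC} for Corollary~\ref{corintro:comparison_Min}; your shortcut in the Claim is legitimate because a fixed point $x$ gives $wR_x=R_x$, so (CM2) applies directly to $R_x$ (with $D$ as the witness in $R_x\cap\Ch(\CCC^w)$) and the auxiliary residue $R_\sigma$ and the connecting galleries $\Gamma_D,\Gamma_E$ of Proposition~\ref{prop:rasim} become unnecessary; and your remark that the relevant twist is trivial is correct, since $w\in\Stab_W(R_x)$ forces $n_I\in W_I\cap N_I=\{1\}$ in Lemma~\ref{lemma:approx}. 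The one external point, which you rightly flag, is whether \cite[Corollary~3.5]{straight} covers finite-order elements; even if it were stated only for infinite order, your reduction is recoverable from the paper's own inputs, since after applying \cite[Corollary~C]{straight} the resulting minimal-length element has standard spherical parabolic closure by \cite[Proposition~4.2]{CF10} and hence fixes a point of $C_0$. In sum, your route buys uniformity with the infinite-order case and dispenses with Lemma~\ref{lemma:sph_conj}, at the cost of re-deploying the whole machinery of Section~\ref{section:TCPIWS}; the paper's route is shorter but needs that extra combinatorial lemma and the parabolic-closure input from \cite{CF10}.
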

\begin{proof}
By \cite[Corollary~C]{straight}, there is some $w_1\in W$ of minimal length in its conjugacy class such that $w\ra w_1$, and we may thus assume that $w$ is of minimal length in its conjugacy class. In particular, by \cite[Proposition~4.2]{CF10}, $w$ has standard parabolic closure $\Pc(w)=W_I$ ($I\subseteq S$), while $u$ has standard parabolic closure $\Pc(u)=W_J$ ($J\subseteq S$). As $W_I$ and $W_J$ are conjugate, we find some $w'\in W_J$ such that $w\tc w'$ by Lemma~\ref{lemma:sph_conj}. On the other hand, by \cite[Theorem~3.1(2)]{HN12} applied in the finite Coxeter group $W_J$, we have $w'\sim u$ (and hence $w'\tc u$). Thus $w\tc w'\tc u$, as desired.
\end{proof}

Here is a reformulation of Theorem~\ref{thmintro:mainthm}.

\begin{theorem}
Let $w\in W$, and let $u$ be of minimal length in the conjugacy class of $w$. Then $w\ra\tc u$. If, moreover, $w$ is straight, then $w\ra u$.
\end{theorem}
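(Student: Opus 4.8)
The plan is to deduce the theorem from the two propositions already proved, by splitting on the order of $w$. The first claim, $w\ra\tc u$, is precisely Proposition~\ref{prop:rasim} when $w$ has infinite order and precisely Proposition~\ref{prop:main_result_finite} when $w$ has finite order; so the first assertion follows immediately once these two cases are distinguished, with no further argument needed.

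For the additional claim concerning straight $w$, the key elementary observation I would record is that a straight element of finite order must be trivial: if $w$ is straight and $w^k=1_W$ for some $k\in\NN$ with $k\geq 1$, then by the defining property of straightness $0=\ell(w^k)=k\ell(w)$, whence $\ell(w)=0$ and $w=1_W$. In this degenerate situation the conjugacy class of $w$ reduces to $\{1_W\}$, so $u=w=1_W$ and $w\ra u$ holds trivially. Otherwise $w$ is straight of infinite order, and then the second part of Proposition~\ref{prop:rasim} directly yields $w\ra u$, completing the proof.

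Since all the substantial work has already been absorbed into Propositions~\ref{prop:rasim} and \ref{prop:main_result_finite}, no genuine obstacle remains at this stage. The only point requiring a moment of care is the remark that straightness and finite order are incompatible except for the identity; this is exactly what allows the \emph{moreover} clause to be reduced entirely to the infinite-order proposition, rather than needing a separate straight analogue of the finite-order case.
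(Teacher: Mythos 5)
Your proof is correct and follows essentially the same route as the paper, which simply combines Propositions~\ref{prop:rasim} and \ref{prop:main_result_finite} according to whether $w$ has infinite or finite order. Your explicit observation that a straight element of finite order is trivial (so the \emph{moreover} clause reduces entirely to Proposition~\ref{prop:rasim}) is a correct detail the paper leaves implicit.
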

\begin{proof}
This sums up Propositions~\ref{prop:rasim} and \ref{prop:main_result_finite}.
\end{proof}

\section{Comparison of \texorpdfstring{$\Min(w)$ and $\CMin(w)$}{Min(w) and CombiMin(w)}}\label{section:COMAC}

This final section is devoted to the proof of Corollary~\ref{corintro:comparison_Min}. We start with the analogue of Proposition~\ref{prop:projection} for elements $w\in W$ of finite order.

\begin{lemma}\label{lemma:proj_finite_order}
Let $w\in W$ be of finite order. Let $x\in\Min(w)$, and let $R$ be the spherical residue corresponding to $x$. Let $D\in\Ch(X)$. Then  $\dc(C,wC)\leq\dc(D,wD)$, where $C:=\proj_R(D)$. In particular, if $D\in\CMin(w)$, then $\proj_R(D)\in\CMin(w)$.
\end{lemma}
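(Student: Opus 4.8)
The plan is to mirror the structure of the proof of Proposition~\ref{prop:projection}, but the finite-order hypothesis makes the argument considerably shorter. First I would reduce to the case $C=C_0$ exactly as in the opening paragraph of that proof: writing $C=vC_0$ and conjugating by $v$, the point $x':=v\inv x$ lies in $\Min(v\inv wv)$, the residue $R':=v\inv R$ is the one corresponding to $x'$, and $\proj_{R'}(v\inv D)=v\inv\proj_R(D)=C_0$ by equivariance of projection; since $\dc$ is $W$-invariant, it suffices to treat the conjugated data. Thus I may assume that $R$ is standard, say $\Stab_W(R)=W_I$ for a spherical subset $I\subseteq S$, and that $C_0=\proj_R(D)$.

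The key point, and the place where finiteness intervenes, is that $w$ fixes the residue $R$ setwise. Indeed, since $w$ has finite order, $\Min(w)$ is the fixed-point set of $w$ (see \S\ref{subsection:AOC}), so $wx=x$; as $\Stab_W(x)=\Stab_W(\supp(x))=\Stab_W(R)$ (see \S\ref{subsection:DC}), we get $w\in\Stab_W(R)=W_I$, whence $wR=R$. This is precisely the simplification over the infinite-order situation of Proposition~\ref{prop:projection}, where $w$ merely stabilises the \emph{set of walls} of $R$ and sends $R$ to a distinct parallel residue, thereby forcing the more delicate argument involving powers of $w$ and the straightness of $n_I$.

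From $wR=R$ together with the $W$-equivariance of projections I then obtain $\proj_R(wD)=\proj_{wR}(wD)=w\proj_R(D)=wC_0$. Now I project a minimal gallery from $D$ to $wD$ onto $R$: since $\proj_R$ maps galleries to galleries and does not increase the chamber distance (see the Projections subsection), its image is a gallery from $\proj_R(D)=C_0$ to $\proj_R(wD)=wC_0$ of length at most $\dc(D,wD)$. As any gallery from $C_0$ to $wC_0$ has length at least $\dc(C_0,wC_0)=\ell(w)$, I conclude that $\dc(C,wC)=\dc(C_0,wC_0)\leq\dc(D,wD)$, which is the asserted inequality.

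Finally, the ``in particular'' statement is immediate: if $D\in\CMin(w)$, then $\dc(D,wD)$ is minimal among all chambers, so the inequality just established must be an equality, giving $\proj_R(D)=C\in\CMin(w)$. I do not anticipate a serious obstacle here; the only points requiring care are the identification $\Stab_W(x)=\Stab_W(R)$ that yields $wR=R$, and the standard fact that projection does not increase gallery length, both of which are already recorded in the preliminaries.
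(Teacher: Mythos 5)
Your proof is correct and follows essentially the same route as the paper's (which is a one-liner: $wR=R$ since $w$ fixes $x$, hence $wC=\proj_R(wD)$, and $\proj_R$ does not increase the chamber distance). The initial reduction to $C=C_0$ is harmless but unnecessary, since the equivariance argument $\proj_R(wD)=w\proj_R(D)$ works for arbitrary $C$.
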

\begin{proof}
As $wR=R$, we have $wC=\proj_{R}(wD)$, so that the lemma follows from the fact that $\proj_R$ does not increase the chamber distance.
\end{proof}

\begin{prop}\label{prop:minCMin}
Let $w\in W$. Then $\Min(w)\subseteq\CMin(w)$.
\end{prop}
\begin{proof}
Let $x\in \Min(w)$. If $w$ has finite order, then by Lemma~\ref{lemma:proj_finite_order}, the projection on the residue corresponding to $x$ of any chamber $D\in\CMin(w)$ is a chamber $C\in\CMin(w)$ containing $x$. Assume now that $w$ has infinite order, and let $L$ be the $w$-axis through $x$. Let $\sigma\subseteq L$ be a nonempty open geodesic segment containing $x$ in its closure and contained in some (open) cell. Then Proposition~\ref{prop:projection} yields some chamber $D\in\CMin(w)$ containing $\sigma$, and hence also $x$.
\end{proof}

\begin{lemma}\label{lemma:centraliser_cocompact}
Let $w\in W$. Then the centraliser $C_W(w)$ of $w$ in $W$ acts cocompactly on both $\Min(w)$ and $\CMin(w)$.
\end{lemma}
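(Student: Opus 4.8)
The plan is to treat the two sets separately, since the combinatorial set $\CMin(w)$ admits a purely algebraic argument while $\Min(w)$ requires the geometry of the $W$-action on $X$. In both cases one first records that $C_W(w)$ preserves the set in question: if $c\in C_W(w)$ then $\dc(cD,wcD)=\dc(cD,cwD)=\dc(D,wD)$, so $C_W(w)$ permutes $\CMin(w)$, and likewise $\dist(cx,wcx)=\dist(x,wx)$ shows $C_W(w)$ preserves $\Min(w)$. Thus ``cocompactly'' means finitely many orbits on the discrete set $\CMin(w)$ of chambers, and the existence of a compact fundamental domain for the convex set $\Min(w)$.

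For $\CMin(w)$, I would use that $\CMin(w)=\pi_w\inv(\OOO_{\min})$, where $\OOO_{\min}$ is the set of minimal-length conjugates of $w$; this set is \emph{finite}, since $S$ is finite and hence there are only finitely many elements of $W$ of any fixed length. I would then check that each nonempty fibre $\pi_w\inv(u)$ ($u\in\OOO_{\min}$) is a single $C_W(w)$-orbit of chambers: if $v_0\inv wv_0=u=v\inv wv$, then $c:=vv_0\inv$ satisfies $c\inv wc=w$, i.e. $c\in C_W(w)$, so $vC_0\in C_W(w)\cdot(v_0C_0)$. Consequently $\CMin(w)$ is a union of at most $|\OOO_{\min}|$ orbits, which gives cocompactness immediately.

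For $\Min(w)$, I would invoke that $W$ acts properly and cocompactly by isometries on the proper $\CAT$ space $X$ (stabilisers being the finite parabolic subgroups $\Stab_W(x)$). Fix a compact set $K_0$ with $WK_0=X$. Given $y\in\Min(w)$, choose $h\in W$ with $h\inv y\in K_0$; then $h\inv y\in\Min(h\inv wh)\cap K_0$, a conjugate of $w$ of the same translation length $|w|$. The crucial finiteness step is that the set of conjugates $g'$ of $w$ with $\Min(g')\cap K_0\neq\varnothing$ is finite: for $z\in\Min(g')\cap K_0$ one has $g'z\in\overline{B}(z,|w|)\subseteq\overline{B}(K_0,|w|)$, a compact set, so $g'K_0\cap\overline{B}(K_0,|w|)\neq\varnothing$, and properness of the action bounds the number of such $g'$. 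Writing representatives $g_i'=h_i\inv wh_i$, the relation $h\inv wh=g_i'$ forces $c:=hh_i\inv\in C_W(w)$ with $c\inv y\in h_iK_0\cap\Min(w)$; then $K:=\bigcup_i(h_iK_0\cap\Min(w))$ is compact and satisfies $C_W(w)\cdot K=\Min(w)$.

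The main obstacle is the $\Min(w)$ case, and specifically the finiteness of conjugates of $w$ whose minimal set meets $K_0$: this is precisely where properness of the $W$-action is essential, and it is what upgrades the ``local'' cocompactness of $W$ on $X$ to cocompactness of the centraliser on the a priori unbounded convex set $\Min(w)$. The argument is uniform in the order of $w$: when $w$ has finite order one has $|w|=0$ and $\Min(w)$ is the fixed-point set of $w$, and the same reasoning applies verbatim with $\overline{B}(K_0,0)=K_0$.
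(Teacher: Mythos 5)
Your proof is correct and matches the paper's approach: for $\CMin(w)$ the paper's contradiction argument has exactly the same content as your direct one (finitely many minimal-length conjugates, and each fibre of $\pi_w$ over one of them is a single $C_W(w)$-orbit). For $\Min(w)$ the paper simply cites \cite[Theorem~3.2]{Ru01}, and your properness argument is the standard proof of that result, so nothing is gained or lost.
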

\begin{proof}
The fact that $C(w):=C_W(w)$ acts cocompactly on $\Min(w)$ follows from \cite[Theorem~3.2]{Ru01}, and a straightforward adaptation of the proof of \cite[Theorem~3.2]{Ru01}, which we now provide, also shows that $C(w)$ acts cocompactly on $\CMin(w)$.

Indeed, fix some $D\in\CMin(w)$, and assume for a contradiction that there is a sequence of chambers $(D_n)_{n\in\NN}\subseteq\CMin(w)$ such that $\dc(D_n,C(w)D)\geq n$ for all $n\in\NN$. Write $D_n=v_nD$ for some $v_n\in W$. By hypothesis, we have $\dc(D,v_n\inv wv_nD)=\dc(D_n,wD_n)=\dc(D,wD)$ for all $n\in\NN$. In particular, $\{v_n\inv wv_n \ | \ n\in\NN\}$ is finite. Hence, up to extracting a subsequence, we may assume that $v_n\inv wv_n=v_0\inv wv_0$ for all $n\in\NN$, that is, $v_nv_0\inv\in C(w)$ for all $n\in\NN$. But then
$$n\leq\dc(D_n,C(w)D)\leq\dc(D_n,v_nv_0\inv D)=\dc(D,v_0\inv D)=\dc(D_0,D)$$
for all $n\in\NN$, a contradiction.
\end{proof}

\smallskip
\noindent
{\bf Proof of Corollary~\ref{corintro:comparison_Min}:} Let $w\in W$. Then $\Min(w)\subseteq\CMin(w)$ by Proposition~\ref{prop:minCMin}, and $\CMin(w)$ is at bounded Hausdorff distance from $\Min(w)$ by Lemma~\ref{lemma:centraliser_cocompact}. $\hspace{\fill}\qed$

\bibliographystyle{amsalpha} 
\bibliography{these} 

\end{document}